\documentclass{amsart}
\usepackage[latin1]{inputenc}
\usepackage{amsmath}
\usepackage{amsthm}
\usepackage{amsfonts}
\usepackage{amssymb}
\usepackage{enumerate}
\usepackage{mathrsfs}
\usepackage{graphicx}
\usepackage{hyperref}

\DeclareMathOperator{\orb}{orb}

\DeclareMathOperator{\Sing}{Sing}

\DeclareMathOperator{\Mat}{Mat}

\DeclareMathOperator{\GL}{GL}

\def\qa{{\boldsymbol{{G}}}}
\def\P{[0]}

\def\ZZ{\mathbb{Z}}
\def\BB{\mathbb{B}}
\def\NN{\mathbb{N}}
\def\CC{\mathbb{C}}
\def\Q{\mathbf{Q}}
\def\QQ{\mathbb{Q}}
\def\bQ{\mathbf{Q}}

\def\PP{\mathbb{P}}
\def\w{\omega}
\def\bw{\bar \omega}
\def\l{\ell}

\def\cC{\mathcal{C}}

\def\l{\ell}

\newcommand\bd{\mathbf{d}}

\newcommand\ba{\mathbf{a}}
\newcommand\bxi{\boldsymbol{\xi}}

\newtheorem{thm}{Theorem}[section]  
\newtheorem{prop}{Proposition}[section]
\newtheorem{cor}{Corollary}[section]
\newtheorem{lemma}{Lemma}[section]
\theoremstyle{remark}
\newtheorem{rem}{Remark}[section]
\newtheorem{notation}{Notation}[section]
\theoremstyle{definition}
\newtheorem{dfn}{Definition}[section]
\newtheorem{exam}{Example}[section]

\makeatletter
\let\c@lemma\c@thm
\let\c@prop\c@thm
\let\c@conj\c@thm
\let\c@cor\c@thm
\let\c@rem\c@thm
\let\c@dfn\c@thm
\let\c@notation\c@thm
\let\c@exam\c@thm
\makeatother

\def\makeautorefname#1#2{\expandafter\def\csname#1autorefname\endcsname{#2}}

\makeautorefname{thm}{Theorem}%
\makeautorefname{lemma}{Lemma}%
\makeautorefname{rem}{Remark}%
\makeautorefname{notation}{Notation}%
\makeautorefname{dfn}{Definition}%
\makeautorefname{exam}{Example}%
\makeautorefname{prop}{Proposition}%
\makeautorefname{cor}{Corollary}%

\title
[Local invariants on quotient singularities and a genus formula ...]
{Local invariants on quotient singularities and a genus formula for 
weighted plane curves}

\author[J.I. Cogolludo]{Jos{\'e} Ignacio Cogolludo-Agust{\'i}n}
\address{Departamento de Matem\'aticas, IUMA\\ 
Universidad de Zaragoza\\ 
C.~Pedro Cerbuna 12\\ 
50009 Zaragoza, Spain} 
\email{jicogo@unizar.es} 

\author[J. Martín]{Jorge Martín-Morales}
\address{Centro Universitario de la Defensa-IUMA\\ 
Academia General
Militar\\ 
Ctra. de Huesca s/n.\\ 
50090, Zaragoza, Spain} 
\email{jorge@unizar.es,jortigas@unizar.es}

\author[J. Ortigas]{Jorge Ortigas-Galindo}

\begin{document}

\thanks{All authors are partially supported by
the Spanish Ministry of Education MTM2010-21740-C02-02 and 
\emph{E15 Grupo Consolidado Geometr\'{\i}a} from the Gobierno de Aragón. 
The second author is also supported by FQM-333 from  
Junta de Andaluc{\'\i}a, and PRI-AIBDE-2011-0986 
\emph{Acción Integrada hispano-alemana}.
}

\subjclass[2010]{32S05, 14H50, 32S25, 14F45}

 \maketitle
\begin{abstract}
In this paper we extend the concept of Milnor fiber and Milnor number of a curve singularity 
allowing the ambient space to be a quotient surface singularity. A generalization of the local 
$\delta$-invariant is defined and described in terms of a $\bQ$-resolution of the curve singularity. 
In particular, when applied to the classical case (the ambient space is a smooth surface) one 
obtains a formula for the classical $\delta$-invariant in terms of a $\bQ$-resolution, 
which simplifies considerably effective computations. All these tools will finally allow for an 
explicit description of the genus formula of a curve defined on a weighted projective plane in terms 
of its degree and the local type of its singularities.
\end{abstract}

\section{Introduction}

In the present paper we focus on the study of curve germs on non-smooth surfaces, in particular on 
quotient surface singularities. Besides the interest on its own, this study will be useful to 
give a formula for the genus of curves in a weighted projective plane.

Hence, the goal of this paper is two-fold: first, define and investigate some of the properties 
of local invariants of curve germs on quotient singular surfaces such as the Milnor number, the 
$\delta^\w$-invariant, or the Noether formula, in terms of embedded $\Q$-resolutions. Second, apply
these invariants and formulas to obtain a global invariant such as the genus of a weighted plane 
curve in terms of their degree and the local $\delta^\w$-invariant of their singularities.

Let $X$ be a quotient singularity and $\cC=\{f=0\}$ a $\Q$-divisor on $X$. By means of the cyclic action,
one can canonically obtain a function $F$ on $X$ and thus define the Milnor fiber and Milnor number 
$\mu^\w$ of $F$ in a standard way (see~\cite{Le-Someremarks,STV-Milnornumbers}). Also, a definition of the 
local invariant $\delta^\w_0$ can be given in terms of the Milnor fiber of $(\cC,0)$ by means of the 
formula $\mu^\w=2\delta^\w -r^\w +1$, where $\mu^\w$ is the Milnor number of $(\cC,0)$ and $r^\w$ 
is the number of local branches of $\cC$ at~$0$. In the classical case ($X=\CC^2$) the invariant 
$\delta$ can be obtained from a resolution of the local singularity $(\cC,0)$ in $(\CC^2,0)$ as
\begin{equation}
\label{eq-delta-form}
\delta=\sum_{Q\prec 0} \frac{\nu_Q(\nu_Q -1)}{2},
\end{equation}
where $Q$ runs over all the infinitely near points of $0$ of the $\sigma$-process in the resolution and
$\nu_Q$ denotes the multiplicity of the strict transform of $\cC$ at~$Q$. 

A similar result is shown here by allowing $X$ to be a quotient surface singularity and the resolution
to be an embedded $\Q$-resolution (see \autoref{thm-delta}).

Consider an irreducible curve $\cC\subset \PP^2$ of degree $d$, it is a classical result 
(cf.~\cite{northcott-genus,Milnor-singular,Serre-groupes,Brieskorn-Knorrer-curves,casas-singularities}) 
that the genus of its normalization considered as a compact oriented Riemann surface (also denoted by $g(\cC)$) 
depends only on $d$ and the local type of its singularities as follows:
$$g(\cC)=g_d-\sum_{P\in \Sing(\cC)} \delta_P,$$
where $g_d=\frac {(d-1)(d-2)}{2}$. 

A natural question is to determine the genus of a weighted-projective curve from its degree and the local type 
of its singularities. This question is solved in \autoref{thm-main} (see also 
\cite{dolgachev-weighted,Orlik-Wagreich-isolated} for the quasi smooth case).

For the sake of completeness, a brief section (see \S\ref{sec-prelim}) to introduce the necessary definitions 
and notation is included.

\section{Preliminaries}
\label{sec-prelim}

Let us recall some definitions and properties on $V$-manifolds, weighted projective spaces, embedded $\bQ$-resolutions, 
and weighted blow-ups, see~\cite{dolgachev-weighted,AMO11a,AMO11b} for a more detailed exposition.

\subsection{$V$-manifolds and quotient singularities}
\mbox{}

\begin{dfn}
A $V$-manifold of dimension $n$ is a complex analytic space which admits an open
covering $\{U_i\}$ such that $U_i$ is analytically isomorphic to $B_i/G_i$ where
$B_i \subset \CC^n$ is an open ball and $G_i$ is a finite subgroup of $\GL(n,\CC)$.
\end{dfn}

\begin{thm}\label{th_Prill}{\rm (\cite{Prill67}).}~Let $G_1$, $G_2$ be small
subgroups of $\GL(n,\CC)$. Then $\CC^n/G_1$ is isomorphic to $\CC^n/G_2$ if
and only if $G_1$ and $G_2$ are conjugate subgroups. $\hfill \Box$
\end{thm}

\begin{notation}\label{notation_action}
For $\bd: = {}^t(d_1 \ldots d_r)$ we denote by
$\qa_{\bd} := \qa_{d_1} \times \cdots \times \qa_{d_r}$ a finite
abelian group written as a product of finite cyclic groups, that is, $\qa_{d_i}$
is the cyclic group of $d_i$-th roots of unity in $\CC$. Consider a matrix of weight
vectors 
\begin{align*}
A & := (a_{ij})_{i,j} = [\ba_1 \, | \, \cdots \, | \, \ba_n ] \in \Mat (r \times n, \ZZ), \\ 
\ba_j & := {}^t (a_{1 j}\dots a_{r j}) \in \Mat(r\times 1,\ZZ),
\end{align*}
and the action
\begin{equation*}
\begin{array}{cr}
( \qa_{d_1} \times \cdots \times \qa_{d_r} ) \times \CC^n  \longrightarrow  \CC^n,&\bxi_\bd = 
(\xi_{d_1}, \ldots, \xi_{d_r}), \\[0.15cm]
\big( \bxi_{\bd} , \mathbf{x} \big) \mapsto (\xi_{d_1}^{a_{11}} \cdot\ldots\cdot
\xi_{d_r}^{a_{r1}}\, x_1,\, \ldots\, , \xi_{d_1}^{a_{1n}}\cdot\ldots\cdot
\xi_{d_r}^{a_{rn}}\, x_n ), & \mathbf{x} = (x_1,\ldots,x_n).
\end{array}
\end{equation*}

Note that the $i$-th row of the matrix $A$ can be considered modulo $d_i$. The
set of all orbits $\CC^n / G$ is called ({\em cyclic}) {\em quotient space of
type $(\bd;A)$} and it is denoted by
$$
  X(\bd; A) := X \left( \begin{array}{c|ccc} d_1 & a_{11} & \cdots & a_{1n}\\
\vdots & \vdots & \ddots & \vdots \\ d_r & a_{r1} & \cdots & a_{rn} \end{array}
\right).
$$
The orbit of an element $\mathbf{x}\in \CC^n$ under this action is denoted
by $[\mathbf{x}]_{(\bd; A)}$ and the subindex is omitted if no ambiguity
seems likely to arise. Using multi-index notation
the action takes the simple form
\begin{eqnarray*}
\qa_\bd \times \CC^n & \longrightarrow & \CC^n, \\
(\bxi_\bd, \mathbf{x}) & \mapsto & \bxi_\bd\cdot\mathbf{x}:=(\bxi_\bd^{\ba_1}\, x_1, \ldots, \bxi_{\bd}^{\ba_n}\, x_n).
\end{eqnarray*}
\end{notation}

The quotient of $\CC^n$ by a finite abelian group is always isomorphic to a quotient space of type $(\bd;A)$, 
see~\cite{AMO11a} for a proof of this classical result. Different types $(\bd;A)$ can give rise to isomorphic quotient spaces.

\begin{exam}\label{Ex_quo_dim1} When $n=1$ all spaces $X(\bd;A)$ are
isomorphic to~$\CC$. It is clear that we can assume that $\gcd(d_i, a_{i})=1$.
If $r=1$, the map $[x] \mapsto x^{d_1}$ gives an isomorphism between $X(d_1; a_{1})$ and
$\CC$. 

Consider the case $r=2$. Note that
$\CC/(\qa_{d_1} \times \qa_{d_2})$ equals $(\CC/\qa_{d_1})/ \qa_{d_2}$.
Using the previous isomorphism, it is isomorphic to $X(d_2, d_1 a_2)$,
which is again isomorphic to $\CC$. By induction, we obtain the result for any~$r$.
\end{exam}

If an action is not free on $(\CC^{*})^n$ we can factor the group by the kernel of the action and
the isomorphism type does not change. This motivates the following definition.

\begin{def}\label{def_normalized_XdA}
The type $(\bd;A)$ is said to be {\em normalized} if the action is free on $(\CC^{*})^n$
and~$\qa_\bd$ is small as subgroup of $\GL(n,\CC)$.
By abuse of language we often say the space $X(\bd;A)$ is written in a normalized
form when we mean the type $(\bd;A)$ is normalized.
\end{def}

\begin{prop}
The space $X(\bd;A)$ is written in a normalized form if and only if the stabilizer
subgroup of $P$ is trivial for all~$P \in \CC^n$ with exactly $n-1$ coordinates
different from zero.

In the cyclic case the stabilizer of a point as above (with exactly $\,n-1$
coordinates different from zero) has order $\gcd(d, a_1, \ldots, \widehat{a}_i,
\ldots, a_n)$.
\end{prop}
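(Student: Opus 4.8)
The plan is to reduce everything to the observation that, because $\qa_\bd$ acts diagonally, any element fixing a point with exactly one vanishing coordinate is either trivial or a pseudo-reflection along a coordinate hyperplane. Fix such a point $P$ with $x_i=0$ and $x_j\neq 0$ for $j\neq i$. First I would describe its stabilizer: an element $\bxi_\bd$ fixes $P$ if and only if $\bxi_\bd^{\ba_j}=1$ for all $j\neq i$, since the vanishing of the $i$-th coordinate imposes no condition. Such a $\bxi_\bd$ thus acts as the identity on every coordinate line $x_j$ with $j\neq i$ and multiplies $x_i$ by $\bxi_\bd^{\ba_i}$; hence it is either the identity of $\GL(n,\CC)$ (when $\bxi_\bd^{\ba_i}=1$) or a pseudo-reflection with fixed hyperplane $\{x_i=0\}$ (when $\bxi_\bd^{\ba_i}\neq 1$). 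This dichotomy is the key structural fact, and recording it carefully is where most of the care is needed.

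With this in hand the equivalence splits cleanly. For the implication from the stabilizer condition to normalization, I would first note that freeness on $(\CC^*)^n$ is immediate: if $\bxi_\bd$ fixes a point with all coordinates nonzero then $\bxi_\bd^{\ba_j}=1$ for every $j$, so $\bxi_\bd$ lies in the stabilizer of some $P$ as above and is therefore trivial; in particular the action is faithful. Smallness follows because a nontrivial pseudo-reflection must be diagonal with a single non-unit entry, say at position $i$, and would then fix every point with $x_i=0$ and the remaining coordinates nonzero, contradicting triviality of that stabilizer. Conversely, assuming $(\bd;A)$ normalized, any $\bxi_\bd$ in the stabilizer of $P$ is, by the dichotomy, trivial or a pseudo-reflection; smallness excludes the pseudo-reflection case for a nonidentity element, and faithfulness (a consequence of freeness) forces the remaining element to be the identity of the group, so the stabilizer is trivial.

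For the cyclic case I would specialize to $r=1$, identify $\qa_d$ with $\ZZ/d$ by sending a generator to $1$, and write the stabilizer of $P$ as $H=\{\,k\in\ZZ/d : k a_j\equiv 0 \pmod d \text{ for all } j\neq i\,\}$. The cleanest route is to recognize $H$ as the annihilator of the subgroup $K\subseteq\ZZ/d$ generated by the classes $a_j \bmod d$ with $j\neq i$, under the perfect pairing $\ZZ/d\times\ZZ/d\to\QQ/\ZZ$ given by $(k,a)\mapsto ka/d$. Since $K=\langle \gcd(d,a_1,\dots,\widehat{a}_i,\dots,a_n)\rangle$ has order $d/g$ with $g=\gcd(d,a_1,\dots,\widehat{a}_i,\dots,a_n)$, its annihilator has order $d/(d/g)=g$, which is the asserted order.

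The main obstacle is not any single computation but the bookkeeping in the equivalence: one must check that both halves of the normalization hypothesis are genuinely used — freeness (hence faithfulness) to kill elements acting trivially, and smallness to kill pseudo-reflections — and that the converse extracts exactly these two facts from triviality of the stabilizers. Once the diagonal dichotomy above is established, the order computation in the cyclic case is routine via the annihilator duality.
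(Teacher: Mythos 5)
Your proof is correct. One point of context: the paper itself states this proposition without any proof (it is quoted as a standard fact, in the spirit of Prill's classification and the references \cite{Prill67,AMO11a}), so there is no in-paper argument to compare against; your write-up supplies the missing details, and it does so soundly. The diagonal dichotomy you isolate --- an element stabilizing a point with exactly one vanishing coordinate acts either as the identity matrix or as a pseudo-reflection fixing the coordinate hyperplane $\{x_i=0\}$ --- is precisely the right structural fact, because it matches the two halves of \autoref{def_normalized_XdA}: triviality of the kernel (equivalently, freeness on $(\CC^*)^n$) kills the elements acting as the identity, and smallness kills the pseudo-reflections; your converse direction correctly extracts both properties back out of the stabilizer condition, including the subtle distinction between ``acts as the identity on $\CC^n$'' and ``is the identity element of $\qa_\bd$,'' which is where a careless argument would go wrong for non-faithful actions. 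The cyclic computation is also right: the stabilizer is the annihilator of the subgroup $\langle a_j \bmod d : j\neq i\rangle = \langle g \rangle$ of order $d/g$, where $g=\gcd(d,a_1,\ldots,\widehat{a}_i,\ldots,a_n)$, so it has order $g$; one could avoid the duality language by simply observing that the simultaneous congruences $k a_j \equiv 0 \pmod d$, $j \neq i$, hold exactly when $d/g$ divides $k$, giving the $g$ multiples of $d/g$ in $\ZZ/d$, but your argument is equivalent and equally short.
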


It is possible to convert general types~$(\bd;A)$ into their normalized form.
\autoref{th_Prill} allows one to decide whether two quotient spaces are
isomorphic. In particular, one can use this result to compute the singular points
of the space $X(\bd;A)$.
In \autoref{Ex_quo_dim1} we have explained this normalization process in dimension one.
The two dimensional case is treated in the following example. 

\begin{exam}\label{X2}
All quotient spaces for $n=2$ are cyclic. The space $X(d;a,b)$ is written in a
normalized form if and only if $(d,a) = (d,b) = 1$, where $(a_1,\dots,a_n)$ denotes $\gcd(a_1,\dots,a_n)$. 
If this is not the case, one uses the isomorphism (assuming
$(d,a,b)=1$) $X(d;a,b) \rightarrow X \big( \frac{d}{(d,a)(d,b)}; \frac{a}{(d,a)},
\frac{b}{(d,b)} \big)$, $[ (x,y) ] \mapsto [ (x^{(d,b)},y^{(d,a)}) ]$
to convert it into a normalized one.
\end{exam}

\subsection{Weighted Blow-ups and Embedded $\mathbf{Q}$-Resolutions}\label{resolutions}
\mbox{}

\noindent
Classically an embedded resolution of $\{f=0\} \subset \CC^n$ is a proper map $\pi: X \to (\CC^n,0)$ 
from a smooth variety $X$ satisfying, among other conditions, that $\pi^{-1}(\{f=0\})$ is a normal 
crossing divisor. To weaken the condition on the preimage of the singularity we allow the new 
ambient space $X$ to contain abelian quotient singularities and the divisor $\pi^{-1}(\{f=0\})$ to 
have ``normal crossings'' over this kind of varieties. This notion of normal crossing divisor on 
$V$-manifolds was first introduced by Steenbrink in~\cite{Steenbrink77}.

We recall that in the class of $V$-manifolds, the abelian groups of Cartier and Weil divisors are not 
isomorphic. However the isomorphism can be achieved after tensoring by $\QQ$. Such divisors will be
referred to as \emph{$\Q$-divisors}. 

\begin{dfn}
Let $X$ be a $V$-manifold with abelian quotient singularities. A hypersurface
$D$ on $X$ is said to be with {\em $\Q$-normal crossings} if it is
locally isomorphic to the quotient of a union of coordinate hyperplanes under a group
action of type $(\bd;A)$. That is, given $x \in X$, there is an isomorphism of
germs $(X,x) \simeq (X(\bd;A), [0])$ such that $(D,x) \subset (X,x)$ is identified
under this morphism with a germ of the form
$$
\big( \{ [\mathbf{x}] \in X(\bd;A) \mid x_1^{m_1} \cdot\ldots\cdot x_k^{m_k} = 0 \},
[(0,\ldots,0)] \big).
$$
\end{dfn}

\begin{def}\label{Qresolution}
Let $M = \CC^{n+1} / G$ be an abelian quotient space. Consider $H \subset M$ an analytic 
subvariety of codimension one. An embedded $\Q$-resolution of $(H,0) \subset (M,0)$ 
is a proper analytic map~$\pi: X \to (M,0)$ such that:
\begin{enumerate}
\item $X$ is a $V$-manifold with abelian quotient singularities.
\item $\pi$ is an isomorphism over $X\setminus \pi^{-1}(\Sing(H))$.
\item $\pi^{-1}(H)$ is a hypersurface with $\Q$-normal crossings on $X$.
\end{enumerate}
\end{def}

\begin{rem}
\label{rem-nc-qr}
In some cases, one needs to consider a stronger condition on $\Q$-resolutions, namely, the 
strict transform of $H$ does not contain any singular points of $X$.
This can always be achieved by blowing up eventually once more the strict preimage of the hypersurface.
Such an embedded resolution will be referred to as a \emph{strong $\Q$-resolution}.
\end{rem}

Usually one uses weighted or toric blow-ups with smooth center as a tool for finding embedded 
$\Q$-resolutions. Here we only discuss briefly the surface case. Let $X$ be an analytic 
surface with abelian quotient singularities. Let us define the weighted blow-up 
$\pi: \widehat{X} \to X$ at a point $P\in X$ with respect to $\w = (p,q)$. 
We distinguish two different situations.

\vspace{0.20cm}

{\bf (i) The point $P$ is smooth.} Assume $X = \CC^2$ and $\pi = \pi_{\w}: \widehat{\CC}^2_{\w} \to \CC^2$ 
the weighted blow-up at the origin with respect to $\w =(p,q)$,
\[
\widehat{\CC}^2_{\w}:=\{((x,y),[u:v]_{\w})\in\CC^2\times\PP^1_{\w}\mid (x,y)\in\overline{[u:v]}_{\w}\}.
\]

Here the condition about the closure means that $\exists t \in \CC$ , $x=t^pu$, $y =t^qv$.
The new ambient space is covered as $\widehat{\CC}^2_{\w} = U_1 \cup U_2 = X(p;-1,q) \cup X(q;\,p,-1)$ 
and the charts are given by
\begin{center}
$\begin{array}{ccc|ccc}
X(p;-1,q) & \longrightarrow & U_1, & X(q;\,p,-1) & \longrightarrow & U_2, \\[0.10cm]
\,[(x,y)] & \mapsto & ((x^p,x^q y),[1:y]_{\w}); & [(x,y)] & \mapsto & ((x y^p, y^q),[x:1]_{\w}).
\end{array}$
\end{center}

The exceptional divisor $E=\pi_{\w}^{-1}(0)$ is isomorphic to $\PP^1_{\w}$ which is in turn 
isomorphic to~$\PP^1$ under the map $[x:y]_{\w} \mapsto [x^q:y^p]$. The singular points of 
$\widehat{\CC}^2_{\w}$ are cyclic quotient singularities located at the exceptional divisor. 
They actually coincide with the origins of the two charts.

\vspace{0.20cm}

{\bf (ii) The point $P$ is of type $(d;a,b)$.} Assume that $X=X(d;a,b)$. The group $\qa_d$ 
acts also on $\widehat{\CC}^2_{\w}$ and passes to the quotient yielding a map 
$\pi = \pi_{(d;a,b),\w}: \widehat{X(d;a,b)}_{\w} \to X(d;a,b)$, where by definition 
$\widehat{X(d;a,b)}_{\w}:= \widehat{\CC}^2_{\w} / \qa_d$. The new space is covered as
\begin{equation}
\label{eq-charts} 
\widehat{X(d;a,b)}_{\w} = \widehat{U}_1 \cup \widehat{U}_2 = 
X \left( \displaystyle\frac{pd}{e}; 1, \frac{-q+a' pb}{e} \right) \cup X 
\left( \displaystyle\frac{qd}{e}; \frac{-p+b' qa}{e}, 1\right)
\end{equation}
with $a'a=b'b\equiv 1 \mod (d)$ and $e = \gcd(d,pb-qa)$. The charts are given by

\begin{center}
$\begin{array}{c|c}
X \left( \displaystyle\frac{pd}{e}; 1, \frac{-q+a' pb}{e} \right)  \ \longrightarrow \ 
\widehat{U}_1, &X \left( \displaystyle\frac{qd}{e}; \frac{-p+b' qa}{e}, 1
\right) \ \longrightarrow \ \widehat{U}_2, \\[0.2cm] \,\big[ (x^e,y) \big] \mapsto 
\big[ ((x^p,x^q y),[1:y]_{\w}) \big]_{(d;a,b)} & \big[ (x,y^e) \big] \mapsto 
\big[ ((x y^p, y^q),[x:1]_{\w}) \big]_{(d;a,b)}.
\end{array}$
\end{center}

The exceptional divisor $E = \pi_{(d;a,b),\w}^{-1}(0)$ is identified with 
$\PP^1_{\w}(d;a,b) := \PP^1_{\w}/\qa_{d}$. Again the singular points are cyclic 
and correspond to the origins of the two charts.

\vspace{0.15cm}

\begin{prop}\label{formula_self-intersection}
Let $X$ be a surface with abelian quotient singularities. Let $\pi: \widehat{X} \to X$ 
be the weighted blow-up at a point $P$ of type $(d;a,b)$ with respect to $\w=(p,q)$. 
Assume $(d,a)=(d,b)=(p,q)=1$ and write $e = \gcd(d,pb-qa)$.

Consider $C$ and $D$ two $\Q$-divisors on $X$, denote by $E$ the exceptional divisor of $\pi$, 
and by $\widehat{C}$ (resp.~$\widehat{D}$) the strict transform of $C$ (resp.~$D$). Let $\nu_{C,P}$ 
(resp.~$\nu_{D,P}$) be the $(p,q)$-multiplicity of $C$ (resp $D$) at $P$, (defined such that 
$\nu_{x,P}=p$ and $\nu_{y,P}=q$). Then the following equalities hold:
\begin{enumerate}[$(1)$]
 \item\label{formula_self-intersection1} 
 $\displaystyle \pi^{*}(C) = \widehat{C} + \frac{\nu_{C,P}}{e} E$
 \item\label{formula_self-intersection2} 
 $\displaystyle E \cdot \widehat{C} = \frac{e \nu_{C,P}}{p q d}$
 \item\label{formula_self-intersection3} 
 $\displaystyle (C \cdot D)_P=\widehat{C} \cdot \widehat{D}+ \frac{\nu_{C,P}\nu_{D,P}}{p q d}$.
\end{enumerate}
\end{prop}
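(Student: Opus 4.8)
The plan is to treat \eqref{formula_self-intersection1} as the fundamental identity, to be proved by an explicit local computation in the two charts \eqref{eq-charts}, and then to obtain \eqref{formula_self-intersection2} and \eqref{formula_self-intersection3} as formal consequences of \eqref{formula_self-intersection1}, the self-intersection of $E$, and the projection formula. First I would recall that on a $V$-manifold every Weil divisor is $\bQ$-Cartier, so $\pi^{*}(C)$ makes sense as a $\bQ$-divisor and can be written uniquely as $\pi^{*}(C)=\widehat C+m_C\,E$; the whole content of \eqref{formula_self-intersection1} is the evaluation $m_C=\nu_{C,P}/e$.

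To compute $m_C$ I would write $C=\{f=0\}$ near $P$ and pull $f$ back through the chart $X\!\left(\frac{pd}{e};1,\frac{-q+a'pb}{e}\right)$ of \eqref{eq-charts}, on which $E$ is the coordinate axis cut out by the first variable. Since $\nu_{C,P}$ is by definition the $\w$-weighted order of $f$ (normalized so that $\nu_{x,P}=p$ and $\nu_{y,P}=q$), substituting the monomial parametrization of \eqref{eq-charts} factors $\pi^{*}f$ as a power of the local equation of $E$ times a remainder not identically vanishing on $E$, which defines $\widehat C$. Performing this first upstairs on $\widehat{\CC}^2_\w$ gives exponent $\nu_{C,P}$, exactly as in the smooth case; the passage to the quotient by $\qa_d$, encoded by the $e$-th power appearing in the chart map of \eqref{eq-charts}, rescales the coefficient of the reduced divisor $E$ to $\nu_{C,P}/e$. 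The identical computation in the second chart returns the same coefficient, establishing \eqref{formula_self-intersection1}. I expect this $e$-bookkeeping — deciding exactly which power of the chart coordinate cuts out $E$ as a $\bQ$-divisor on the quotient, as opposed to on the smooth cover — to be the main obstacle, since it is where the arithmetic of $e=\gcd(d,pb-qa)$ genuinely enters.

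Next I would compute $E^{2}=-e^{2}/(pqd)$. In the smooth case $d=1$ the exceptional divisor of the $\w$-weighted blow-up of $\CC^2$ satisfies $E^{2}=-1/(pq)$, and restricting $\cO_{\widehat X}(E)$ to $E\cong\PP^1_\w(d;a,b)$ and taking orbifold degrees across the two charts of \eqref{eq-charts} rescales this by the factor $e^{2}/d$. Granting this value, \eqref{formula_self-intersection2} is purely formal: the projection formula gives $\pi^{*}(C)\cdot E=C\cdot\pi_{*}E=0$ because $E$ is contracted to the point $P$, so \eqref{formula_self-intersection1} yields
\[
0=\pi^{*}(C)\cdot E=\widehat C\cdot E+\frac{\nu_{C,P}}{e}E^{2},\qquad\text{whence}\qquad \widehat C\cdot E=-\frac{\nu_{C,P}}{e}E^{2}=\frac{e\,\nu_{C,P}}{pqd}.
\]
Alternatively, \eqref{formula_self-intersection2} can be obtained directly by summing the two local contributions of $\widehat C\cdot E$ at the chart origins, each weighted by the order of the corresponding cyclic stabilizer.

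Finally \eqref{formula_self-intersection3} follows formally. Since $\pi$ is a proper map that is an isomorphism away from $E$, the projection formula identifies the local intersection number at $P$ with the intersection of total transforms, $(C\cdot D)_P=\pi^{*}(C)\cdot\pi^{*}(D)$. Using $\pi^{*}(D)\cdot E=0$ together with \eqref{formula_self-intersection1} and \eqref{formula_self-intersection2},
\[
(C\cdot D)_P=\pi^{*}(C)\cdot\pi^{*}(D)=\pi^{*}(C)\cdot\widehat D=\widehat C\cdot\widehat D+\frac{\nu_{C,P}}{e}\,E\cdot\widehat D=\widehat C\cdot\widehat D+\frac{\nu_{C,P}\nu_{D,P}}{pqd},
\]
which is the desired formula. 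Thus all the genuine work is concentrated in \eqref{formula_self-intersection1} and in the value of $E^{2}$; once these are in hand, parts \eqref{formula_self-intersection2} and \eqref{formula_self-intersection3} drop out of the projection formula.
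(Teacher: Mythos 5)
Your proposal cannot be checked against an argument inside this paper: \autoref{formula_self-intersection} is stated in the preliminaries without proof, being recalled from \cite{AMO11a,AMO11b}, where the intersection theory of $\Q$-divisors on $V$-surfaces (pullbacks, pushforwards, projection formula) is developed. Judged on its own, your plan is correct and follows essentially the same route as that reference: establish \eqref{formula_self-intersection1} by a chart computation, compute $E^2=-e^2/(pqd)$, and then obtain \eqref{formula_self-intersection2} and \eqref{formula_self-intersection3} formally from the projection formula. Granting those two inputs and the projection formula for $\Q$-divisors under proper maps of $V$-surfaces, your derivations of \eqref{formula_self-intersection2} and \eqref{formula_self-intersection3} are exactly right.

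The only substantive criticism is that the two inputs carrying all the arithmetic of $e$ --- the coefficient $\nu_{C,P}/e$ and the value of $E^2$ --- are asserted or sketched ("rescales by $e^2/d$") rather than proved, and there is a single device that nails both cleanly and would tighten your write-up. Let $\rho:\widehat{\CC}^2_{\w}\to\widehat{X(d;a,b)}_{\w}=\widehat{\CC}^2_{\w}/\qa_d$ be the degree-$d$ quotient map and $E'$ the exceptional divisor upstairs. The subgroup of $\qa_d$ acting trivially on $E'\cong\PP^1_{\w}$, namely $\{\xi\in\qa_d \mid \xi^{a}=t^{p},\ \xi^{b}=t^{q}\ \text{for some}\ t\in\CC^{*}\}$, has order exactly $e=\gcd(d,pb-qa)$ (this uses $\gcd(p,q)=1$), so $\qa_d$ acts on $E'$ through a faithful action of $\qa_{d/e}$ and $\rho$ is ramified along $E'$ with index $e$, i.e.\ $\rho^{*}E=eE'$. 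Pulling $\pi^{*}(C)=\widehat{C}+m_C E$ back through $\rho$ and comparing with the smooth-case decomposition upstairs (whose exceptional coefficient is the $(p,q)$-order $\nu_{C,P}$) gives $m_C\,e=\nu_{C,P}$, which is \eqref{formula_self-intersection1}; and since finite degree-$d$ maps multiply intersection numbers by $d$,
$$
E^2=\frac{1}{d}\,(\rho^{*}E)^2=\frac{e^2}{d}\,(E')^2=-\frac{e^{2}}{pqd},
$$
confirming the value you assert (sanity check: for $X(2;1,1)$ and $\w=(1,1)$ one gets $e=2$ and $E^2=-2$, the $(-2)$-curve resolving $A_1$). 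With this replacing your two sketched steps, the proof is complete and matches the approach of the cited reference.
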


\subsection{\textbf{Local intersection number on $X(d;a,b)$}}
\label{computation_local_number_V-surface}
\mbox{}

\noindent
Denote by $X$ the cyclic quotient space $X(d;a,b)$ and
consider two divisors $D_1 = \{ f_1 = 0 \}$ and $D_2 = \{ f_2 = 0 \}$ given by
$f_1,f_2\in\CC\{x,y\}$ reduced.
Assume that, $(d; a, b)$ is normalized, $D_1$ is
irreducible, $f_1$ induces a function on $X$, and 
finally that $D_1 \nsubseteq D_2$.

Then as Cartier divisors $D_1 = \{(X,f_1) \}$, $D_2 = \frac{1}{d} \{(X,f_2^d)\}$.
The local number $(D_1 \cdot D_2)_{[P]}$ at the singular point $P$ is defined as
$$
(D_1 \cdot D_2)_{[P]} =
\displaystyle \frac{1}{d} \dim_{\CC} \left( \frac{\CC\{x,y\}^{\qa_d}}{\langle f_1,
f_2^d \rangle} \right),
$$
where $\CC\{x,y\}^{\qa_d}$ is the local ring of functions at~$P$.

Analogously, if $f_1$ does not define a function on $X$, for computing the
intersection number at $P$ one substitutes $f_1$ by $f_1^d$ and divides
the result by $d$.

\begin{exam}
\label{exam-nu-nc}
Let $x_1$ and $x_2$ be the local coordinates of the axes in $M:=X(d;a,b)$. 
Consider $X_i:=\{(M,x_i)\}$ the $\Q$-divisors associated with them.
Note that
$$
(X_1\cdot X_2)_0=\frac{1}{d}.
$$
\end{exam}

\subsection{Weighted projective plane}\label{sec-wpp}
\mbox{}

\noindent
The main reference that has been used in this subsection is \cite{dolgachev-weighted}.
Here we concentrate our attention on describing the analytic structure and singularities.

Let $\w:=(\w_0,\w_1,\w_2)$ be a weight vector, that is, a finite set of coprime positive
integers. 
There is a natural action of the multiplicative group $\CC^{*}$ on
$\CC^{3}\setminus\{0\}$ given by
$$
  (x_0,x_1,x_2) \longmapsto (t^{w_0} x_0,t^{w_1} x_1,t^{\w_2} x_2).
$$

The set of orbits $\frac{\CC^{3}\setminus\{0\}}{\CC^{*}}$ 
under this action is denoted by $\PP^2_\w$ and it is called the {\em weighted projective plane} of type $\w$. 
The class of a nonzero element $(X_0,X_1,X_2)\in \CC^{3}$ 
is denoted by $[X_0:X_1:X_2]_\w$ and the weight vector is omitted if no ambiguity seems likely to arise.
When $(\w_0,\w_1,\w_2)=(1,1,1)$ one obtains the usual projective space
and the weight vector is always omitted. For $\mathbf{x}\in\CC^{3}\setminus\{0\}$,
the closure of $[\mathbf{x}]_\w$ in $\CC^{3}$ is obtained by adding the origin and it
is an algebraic curve.

Consider the decomposition
$
\PP^2_\w = U_0 \cup U_1 \cup U_2,
$
where $U_i$ is the open set consisting of all elements $[X_0:X_1:X_2]_\w$
with $X_i\neq 0$. The map
$$
  \widetilde{\psi}_0: \CC^2 \longrightarrow U_0,\quad
\widetilde{\psi}_0(x_1,x_2):= [1:x_1:x_2]_\w
$$
defines an isomorphism $\psi_0$ if we replace $\CC^2$ by
$X(\w_0;\, \w_1,\w_2)$.
Analogously, $X(\w_1;\,\w_0,\w_2) \cong U_1$ and $X(\w_2;\,\w_0,\w_1) \cong U_2$
under the obvious analytic map.

\begin{rem}(Another way to describe $\PP^2_{\w}$).~Let $\PP^2$ be the classical projective space and 
$\qa_\w = \qa_{\w_0}\times \qa_{\w_1}\times \qa_{\w_2}$ the product of cyclic groups. Consider the group action
$$
\begin{array}{rcl}
\qa_\w \times \PP^2 \quad & \longrightarrow & \quad \PP^2, \\[0.15cm]
\big( (\xi_{\w_0},\xi_{\w_2},\xi_{\w_2}), [X_0:X_1:X_2] \big) & \mapsto &
[\xi_{\w_0}X_0:\xi_{\w_1}X_1:\xi_{\w_2}X_2].
\end{array}
$$

Then the set of all orbits $\PP^2/\qa_\w$ is isomorphic to the weighted projective plane of type $\w$ and 
the isomorphism is induced by the branched covering
\begin{equation}\label{covering}
   \PP^2\ni [X_0:X_1:X_2] \overset{\phi}{\longmapsto}  [X_0^{\w_0}:X_1^{\w_1}:X_2^{\w_2}]_\w \in\PP^2_\w.
\end{equation}

Note that this branched covering is unramified over
$$
\PP^2_{\w} \setminus \{ [X_0,X_1,X_2]_{\w} \mid X_0\cdot X_1\cdot X_2 = 0 \}
$$
and has $\bar{\w}=\w_0\cdot\w_1\cdot\w_2$ sheets. Moreover, the covering respects the coordinate axes.
\end{rem}

\begin{exam}
Let $\phi:\PP^2 \to \PP^2_{\w}$ be the branched covering defined above with weights $\w = (1,2,3)$. 
For instance, the preimage of $[1:1:1]_{\w}$ consists of 6 points, namely the set 
$\{ [1:\xi_2:\xi_3] \in \PP^2 \mid \xi_2 \in \qa_2,\ \xi_3 \in \qa_3 \}$.
\end{exam}

The following result is well known.

\begin{prop}\label{propPw}
Let $d_0 := \gcd (\w_1,\w_2)$, $d_1 := \gcd (\w_0,\w_2)$ , $d_2 := \gcd (\w_0,\w_1)$,
 $e_0:= d_1\cdot d_2$,  $e_1:= d_0\cdot d_1$, $e_2:= d_0\cdot d_1$ 
and $p_i:=\frac{\w_i}{e_i}$. The following map is an isomorphism:
$$
\begin{array}{rcl}
\PP^2 \big(\w_0,\w_1,\w_2\big) & \longrightarrow & \PP^2(p_0,p_1,p_2), \\[0.15cm]
\,[X_0:X_1:X_2] & \mapsto &
\big[\,X_0^{d_0}:X_1^{d_1}:X_2^{d_2}\,\big].
\end{array}
$$
\end{prop}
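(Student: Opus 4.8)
The plan is to exhibit an explicit inverse and check that both composites are the identity, reducing everything to the behaviour of the $\CC^*$-action defining each weighted projective plane. First I would set up notation carefully: with $d_0=\gcd(\w_1,\w_2)$, $d_1=\gcd(\w_0,\w_2)$, $d_2=\gcd(\w_0,\w_1)$, and $p_i=\w_i/e_i$ where $e_0=d_1 d_2$, $e_1=d_0 d_2$, $e_2=d_0 d_1$, one should first verify that the proposed target $(p_0,p_1,p_2)$ is again a legitimate weight vector, i.e.\ that the $p_i$ are coprime positive integers. Positivity is clear once one checks $e_i\mid\w_i$; the coprimality of the triple $(p_0,p_1,p_2)$ follows from the pairwise-gcd structure, since any prime dividing two of the $\w_i$ has been absorbed into the corresponding $d_k$ and hence into the $e_i$'s. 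I would prove $e_i\mid\w_i$ by a local prime-by-prime argument: for a fixed prime $\ell$, compare the $\ell$-adic valuations of $\w_0,\w_1,\w_2$ and check that the valuation of $e_i$ never exceeds that of $\w_i$.

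Next I would show the map $\Phi\colon[X_0:X_1:X_2]\mapsto[X_0^{d_0}:X_1^{d_1}:X_2^{d_2}]$ is \emph{well defined}. A representative of a point of $\PP^2(\w_0,\w_1,\w_2)$ is determined up to the substitution $X_i\mapsto t^{\w_i}X_i$; applying $\Phi$ sends this to $X_i^{d_i}\mapsto t^{\w_i d_i}X_i^{d_i}=(t^{d_0 d_1 d_2})^{p_i}X_i^{d_i}$, using the key identity $\w_i d_i=p_i\,(d_0 d_1 d_2)$ (which is just $\w_i=p_i e_i$ multiplied by $d_i$, together with $d_i e_i=d_0 d_1 d_2$). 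Thus the image is scaled by $s^{p_i}$ with $s=t^{d_0 d_1 d_2}$, so it represents the same point of $\PP^2(p_0,p_1,p_2)$. This is the crux of the verification and the identity $d_i e_i = d_0 d_1 d_2$ is what makes the exponents align.

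For the inverse, the natural candidate comes from the branched covering picture in the remark preceding the statement: raising coordinates to the $d_i$-th power is a degree-$(d_0 d_1 d_2)$ quotient by a product of cyclic groups, so the inverse should be induced by the appropriate roots. Rather than invert coordinatewise (which is ambiguous), I would argue bijectivity directly: surjectivity of $\Phi$ reduces to solving $Y_i=X_i^{d_i}$ for each coordinate, which is always possible over $\CC$, while injectivity amounts to showing that if $X_i^{d_i}$ and $X_i'^{d_i}$ differ by the $(p_0,p_1,p_2)$-action then $X_i$ and $X_i'$ differ by the $(\w_0,\w_1,\w_2)$-action. Combined with the fact that $\Phi$ is evidently holomorphic and proper, this yields an analytic isomorphism.

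I expect the main obstacle to be the injectivity step, where one must pass from an equality $X_i'^{d_i}=s^{p_i}X_i^{d_i}$ back to a single common scaling parameter $t$ with $t^{\w_i}=X_i'/X_i$. The difficulty is purely number-theoretic: one chooses a $(d_0 d_1 d_2)$-th root of $s$ and must absorb the resulting ambiguity of roots of unity into the $\CC^*$-action, which works precisely because $\gcd(d_0,d_1,d_2)$-type relations let one adjust the three coordinates simultaneously. I would handle this by a careful bookkeeping of the cyclic factors $\qa_{d_i}$, showing the stabilizer ambiguity introduced by $\Phi$ is exactly accounted for by the reparametrization $t\mapsto t^{d_0 d_1 d_2}$; with that, \autoref{th_Prill} or a direct orbit computation closes the argument.
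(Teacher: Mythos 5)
The paper gives no proof of \autoref{propPw} at all --- it is introduced as ``well known,'' with \cite{dolgachev-weighted} as the background reference --- so there is no argument of the authors to compare yours against; judged on its own terms, your proposal takes the natural self-contained route and it works. You also (correctly, and silently) repaired a typo in the statement: the map only makes sense with $e_1=d_0d_2$, whereas the paper prints $e_1=e_2=d_0d_1$. Your verification of well-definedness via the identity $\w_i d_i=p_i\,d_0d_1d_2$ is exactly the crux, and your reductions of surjectivity (root extraction) and injectivity (absorbing roots of unity) are the right ones.

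The one place your sketch needs sharpening is the injectivity step, and the number-theoretic input is not a ``$\gcd(d_0,d_1,d_2)$-type relation'': the $d_i$ are automatically pairwise coprime (a prime dividing $d_i$ and $d_j$ would divide all three weights), and that pairwise coprimality is what gives $e_i\mid\w_i$; what injectivity actually needs is $\gcd(\w_0,\w_1,\w_2)=1$. Concretely: from $X_i'^{\,d_i}=s^{p_i}X_i^{d_i}$ choose $u$ with $u^{d_0d_1d_2}=s$; then $X_i'=\zeta_i u^{\w_i}X_i$ with $\zeta_i\in\qa_{d_i}$ (take $\zeta_i=1$ at coordinates where $X_i=0$, which forces $X_i'=0$), and you must produce a single $\tau$ with $\tau^{\w_i}=\zeta_i$ for $i=0,1,2$, after which $t=\tau u$ realizes the $\w$-equivalence of $X$ and $X'$. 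This is precisely the surjectivity of the homomorphism $\qa_{d_0d_1d_2}\to\qa_{d_0}\times\qa_{d_1}\times\qa_{d_2}$, $\tau\mapsto(\tau^{\w_0},\tau^{\w_1},\tau^{\w_2})$, which one proves as follows: it is well defined since $\w_id_i=p_i\,d_0d_1d_2$; it is injective since writing $1=\alpha_0\w_0+\alpha_1\w_1+\alpha_2\w_2$ gives $\tau=\prod_i(\tau^{\w_i})^{\alpha_i}=1$ whenever all $\tau^{\w_i}=1$; and both groups have order $d_0d_1d_2$, so injectivity forces surjectivity. Two smaller points: your final step (bijective, holomorphic, proper $\Rightarrow$ biholomorphic) silently uses that the target is a \emph{normal} complex space --- true here, as weighted projective planes are finite quotients in charts, but it should be said, since bijective holomorphic maps onto non-normal spaces need not be isomorphisms. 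And the appeal to \autoref{th_Prill} is dispensable: that theorem classifies local quotient germs and plays no role in this global statement; the orbit computation above is the entire argument.
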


\begin{rem}
 Note that, due to the preceding proposition, one can always assume the weight
vector satisfies that $(\w_0,\w_1,\w_2)$ are pairwise relatively prime numbers. 
Note that following a similar argument, $\PP^1_{(\w_0,\w_1)} \cong \PP^1$.
\end{rem}

\begin{prop}[Bézout's Theorem on $\PP^2_{\w}$]\label{thm-bezout-P2w}
The intersection number of two $\Q$-divisors, $D_1$ and $D_2$ on $\PP^2_{\w}$ without common components is
\vspace{0.20cm}
$$
\displaystyle 
D_1 \cdot D_2 =\sum_{P\in D_1\cap D_2} (D_1\cdot D_2)_{[P]}= 
  \frac{1}{\bar{\w}} \deg_{\w}(D_1) \deg_{\w}(D_2) \in \mathbb{Q},
$$
where $\bar{\w}=\w_0\w_1\w_2$.
\end{prop}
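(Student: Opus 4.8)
The plan is to pull the $\Q$-divisors back along the branched covering $\phi\colon\PP^2\to\PP^2_\w$ of \eqref{covering}, reduce to classical Bézout on $\PP^2$, and descend by the multiplicativity of intersection numbers under a finite map. As we may by \autoref{propPw} and the remark following it, I would first assume the weights $\w_0,\w_1,\w_2$ pairwise coprime; then the only element of $\qa_\w$ acting trivially on $\PP^2$ is the identity, so $\phi$ exhibits $\PP^2_\w=\PP^2/\qa_\w$ as a generically $\bar\w$-to-one cover, unramified off the coordinate axes. Writing $D_i=\{F_i=0\}$ with $F_i$ weighted homogeneous of weighted degree $m_i:=\deg_\w(D_i)$, the pullback $\phi^\ast D_i$ is cut out by $F_i(X_0^{\w_0},X_1^{\w_1},X_2^{\w_2})$; since each monomial of weighted degree $m_i$ is sent to a monomial of the same ordinary degree, this is an ordinary homogeneous polynomial of degree $m_i$, whence $\deg(\phi^\ast D_i)=m_i$.

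Classical Bézout on $\PP^2$ then gives $\phi^\ast D_1\cdot\phi^\ast D_2=m_1m_2=\deg_\w(D_1)\deg_\w(D_2)$. The core of the argument is to match this upstairs number with the intersection number on $\PP^2_\w$, i.e.\ to prove $\phi^\ast D_1\cdot\phi^\ast D_2=\bar\w\,(D_1\cdot D_2)$. Abstractly this is the projection formula for the degree-$\bar\w$ map $\phi$, but I would verify it fibrewise to make contact with the explicit local definition of \S\ref{computation_local_number_V-surface}: for $P\in D_1\cap D_2$ the fibre $\phi^{-1}(P)$ is a single $\qa_\w$-orbit $\{\tilde P_1,\dots,\tilde P_k\}$ whose points share a common isotropy order $s$ (so $k=\bar\w/s$), and near each $\tilde P_j$ the map $\phi$ is analytically the local quotient $\CC^2\to\CC^2/\qa_s\cong X(s;a,b)$. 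Equivariance forces all the local numbers $(\phi^\ast D_1\cdot\phi^\ast D_2)_{\tilde P_j}$ to coincide, and the invariant-ring definition of the local intersection number is set up precisely so that passing to the $s$-fold local cover multiplies it by $s$, giving $(\phi^\ast D_1\cdot\phi^\ast D_2)_{\tilde P_j}=s\,(D_1\cdot D_2)_{[P]}$. Summing over the orbit yields $k\cdot s\,(D_1\cdot D_2)_{[P]}=\bar\w\,(D_1\cdot D_2)_{[P]}$, and summing over $P$ gives the desired identity. Combining the two displayed equalities produces $\bar\w\,(D_1\cdot D_2)=\deg_\w(D_1)\deg_\w(D_2)$, which is the assertion.

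The hard part is exactly this fibrewise compatibility along the ramification locus (the coordinate axes) and at the quotient singularities, where $\phi$ fails to be a local isomorphism: there one must check directly, from the definition of the local intersection number on $X(s;a,b)$, that the $s$-fold local quotient multiplies it by the stabilizer order $s$, so that the contributions bookkeep to the single factor $\bar\w$. As an independent check on this normalizing constant, one may note that $\Cl(\PP^2_\w)\otimes\QQ$ is one-dimensional, generated by the class of $\cO(1)$ with $\deg_\w$ giving the isomorphism to $\QQ$; hence $D_1\cdot D_2=c\,\deg_\w(D_1)\deg_\w(D_2)$ for a universal constant $c$, and evaluating on the coordinate divisors $\{X_0=0\}$ and $\{X_1=0\}$, whose unique intersection point $[0:0:1]_\w$ is of type $X(\w_2;\w_0,\w_1)$, gives $\{X_0=0\}\cdot\{X_1=0\}=\tfrac1{\w_2}$ by \autoref{exam-nu-nc}; since $\deg_\w(\{X_0=0\})\deg_\w(\{X_1=0\})=\w_0\w_1$, this forces $c=1/\bar\w$, in agreement with the covering computation.
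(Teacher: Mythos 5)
The paper itself contains no proof of \autoref{thm-bezout-P2w}: it is stated in the preliminaries as a known result, imported from the intersection theory on $V$-surfaces developed in \cite{AMO11b} (see also \cite{dolgachev-weighted}), so your argument can only be judged on its own merits. Your outline is the standard proof and is essentially correct: after reducing to pairwise coprime weights, a weighted form of $\w$-degree $m_i$ pulls back under $\phi$ to an ordinary form of degree $m_i$, classical B\'ezout applies upstairs, and the count descends through $\phi^{*}D_1\cdot\phi^{*}D_2=\bar{\w}\,(D_1\cdot D_2)$, which you correctly decompose orbit by orbit using that $\qa_\w$ is abelian (so isotropy is constant along a fibre) and that $\phi$ is locally the quotient by the stabilizer. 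One small caveat: the reduction via \autoref{propPw} deserves a line of its own, since under that isomorphism both $\deg_\w$ and $\bar{\w}$ change, and one should check that the two sides of the asserted formula change by the same factor (a routine verification).

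The substantive issue is the step you yourself flag as hard and then assert: that the $s$-fold local quotient multiplies the invariant-ring intersection number of \S\ref{computation_local_number_V-surface} by $s$. This is not ``set up by the definition'' --- the factor $\frac{1}{d}$ there compensates for replacing $f_2$ by $f_2^d$ so as to get a Cartier divisor, not for the covering --- so it is a genuine lemma, and since the whole proposition reduces to it, leaving it unproved is a real gap. It is true, and can be closed as follows: for $\qa_s$-invariant $g_1,g_2$ generating a parameter ideal, $\CC\{x,y\}$ is a maximal Cohen--Macaulay module of rank $s$ over $\CC\{x,y\}^{\qa_s}$ (the action being faithful), so colengths equal Hilbert--Samuel multiplicities and multiplicities are additive over rank, giving $\dim_\CC \CC\{x,y\}/\langle g_1,g_2\rangle = s\,\dim_\CC \CC\{x,y\}^{\qa_s}/\langle g_1,g_2\rangle$; semi-invariant equations reduce to this case by the paper's convention of raising to the $s$-th power and dividing by $s$, together with the multiplicativity of intersection numbers in powers. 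Alternatively one can simply invoke \cite{AMO11b}, where exactly this compatibility is established. Finally, note that your closing ``consistency check'' is more than a check: granting the two inputs it uses --- bilinearity of the rational intersection product and its invariance under linear equivalence, plus $\Cl(\PP^2_\w)\otimes\QQ\cong\QQ$ detected by $\deg_\w$, both proved in \cite{AMO11b} --- the evaluation on the coordinate axes via \autoref{exam-nu-nc} already forces the constant to be $1/\bar{\w}$, so that paragraph by itself constitutes a complete, and shorter, proof.
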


\section{Milnor fibers on quotient singularities}
\label{sec-milnor}

Our purpose in this section is to provide a definition for the Milnor fiber of the germ of an 
isolated hypersurface $(f,\P)$ singularity defined on an abelian $V$-surface $X(d;a,b)$. 

In the classical case, if $(\cC=\{f=0\},0)\subset (\CC^2,0)$ defines a local singularity, the Milnor 
fiber is defined as $F_t=\{f=t\}$ and it satisfies $\chi (F_t)=r-2\delta$, where $r$ is the number
of local branches of $(\cC,0)$. Note that this cannot be extended directly to the case 
$\cC\subset X(d;a,b)$ because, in general, the germ $(f,\P)$ does not define a function on $X(d;a,b)$.
However, $F:=\prod_{g\in \qa} f^g=uf^d$ ($u$ a unit) is a well-defined function on $X(d;a,b)$ and hence the set $\{F=t\}$ 
is also well defined and invariant under the action of the cyclic group~$\qa_d$. One can offer the following 
alternative definition for the Milnor fiber of~$\cC$.

\begin{dfn}
Let $\cC=\{f=0\} \subset X(d;a,b)$ be a curve germ. 
The \emph{Milnor fiber} $F_t^\w$ of $(\cC,\P)$ is defined as follows,
$$F_t^\w:=\{ F=t\}/{\qa_d}.$$

The \emph{Milnor number} $\mu^\w$ of $(\cC,\P)$ is defined as follows,
$$\mu^\w:=1-\chi^{\orb}(F_t^\w).$$
\end{dfn}

The symbol $\chi^{\orb}(M)$ denotes the orbifold Euler characteristic of $M\subset X=\CC^2/\qa_d$ as a 
subvariety of a quotient space which carries an orbifold structure. Note that one can also consider 
$\cC$ as a germ in $(\CC^2,0)$ in which case, $\chi^{\orb}(F_t^\w)=\frac{1}{d}\chi(F_t)$, where 
$F_t=\{f=t\}\subset \CC^2$. Therefore, $1-\mu^\w=\frac{1}{d}-\frac{\mu}{d}$, which implies
\begin{equation}
\label{eq-milnor}
\mu^\w=\frac{d-1}{d}+\frac{\mu}{d}.
\end{equation}

Note the difference between this definition and 
\cite[Definition~2.10]{ABLM-milnor-number}.

Also note that $\mu^\w(x)=\mu^\w(y)=\frac{d-1}{d}$, which extends immediately to all local 
curves that can become an axis after an action-preserving change of coordinates. In other 
words, 

\begin{dfn}
\label{def-qsmooth}
The curve $\{f=0\}\subset X(d;a,b)$ is called a \emph{$\Q$-smooth} curve if there exists 
$g\in \CC\{x,y\}$ such that $\CC\{f,g\}^\qa=\CC\{x,y\}^\qa$.
\end{dfn}

\begin{cor}
$$\{f=0\} \text{ is a } \Q \text{-smooth curve} \ \Longleftrightarrow \ \mu^\w(f)=\frac{d-1}{d}.$$ 
\end{cor}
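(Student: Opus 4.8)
The plan is to peel the statement down to a classical smoothness criterion and then to match smoothness with the algebraic condition in \autoref{def-qsmooth}. First I would use the identity \eqref{eq-milnor}, $\mu^\w=\frac{d-1}{d}+\frac{\mu}{d}$, so that $\mu^\w(f)=\frac{d-1}{d}$ holds precisely when the classical Milnor number $\mu$ of $f$ at the origin vanishes. Since $f$ is reduced with an isolated singularity, $\mu=\dim_\CC\CC\{x,y\}/(\partial_x f,\partial_y f)$ equals zero exactly when $f\notin\mathfrak m^2$, i.e. when $(\{f=0\},0)\subset(\CC^2,0)$ is smooth. Thus the corollary is equivalent to the geometric assertion that $\{f=0\}$ is $\Q$-smooth if and only if $f$ is smooth at the origin.

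For the implication ($\Leftarrow$) I would exploit that $\{f=0\}$ is an invariant divisor, so $f$ is a semi-invariant of the $\qa_d$-action of some weight $w$, and therefore its linear part $\ell$ is a nonzero semi-invariant linear form of the same weight. Such a form always completes to a semi-invariant linear coordinate system $(\ell,m)$: when $a\not\equiv b\pmod d$ the form $\ell$ is proportional to $x$ or to $y$ and the other axis plays the role of $m$, while when $a\equiv b\pmod d$ every linear form is semi-invariant and any completion works. The map $\Phi:=(f,m)$ is then $\qa_d$-equivariant, and since $\ell$ and $m$ are linearly independent it is a local biholomorphism; hence $\CC\{f,m\}=\CC\{x,y\}$, so a fortiori $\CC\{f,m\}^\qa=\CC\{x,y\}^\qa$ and $\{f=0\}$ is $\Q$-smooth. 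Geometrically $\Phi$ carries $\{f=0\}$ onto a coordinate axis, in agreement with the remark preceding the corollary.

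For the implication ($\Rightarrow$), suppose $\CC\{f,g\}^\qa=\CC\{x,y\}^\qa$ for some $g$. I would first note that a dimension count forces $f$ and $g$ to be analytically independent, since the left-hand ring is two-dimensional. After reducing $g$ to a semi-invariant (so that $\CC\{f,g\}$ is a graded subring), the pair $(f,g)$ defines a $\qa_d$-equivariant germ map $\Phi\colon(\CC^2,0)\to(\CC^2,0)$ for a suitable diagonal action on the target, and the hypothesis is exactly the statement that the induced morphism $\bar\Phi$ between the quotient germs $X(d;a,b)$ and $X(d;\alpha,\beta)$ is an isomorphism. Invoking \autoref{th_Prill} to normalize the target action and the fact that $\CC\{x,y\}$ is recovered as the canonical finite cover of each normalized quotient singularity, I would lift $\bar\Phi$ to an equivariant biholomorphism of the covers and identify it with $\Phi$ up to a deck transformation. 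Consequently $\Phi$ is itself a biholomorphism, $f$ is a coordinate function, and $\{f=0\}$ is smooth.

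The hard part is this last implication. The hypothesis equates only the invariant subrings and not the full rings $\CC\{f,g\}$ and $\CC\{x,y\}$, so the core difficulty is to exclude that a singular $f\in\mathfrak m^2$ might, together with some skillfully chosen $g$, still generate every invariant function. Concretely, the two delicate steps are the reduction to a semi-invariant $g$ (controlling the non-graded case) and the passage from an isomorphism of the possibly non-regular quotient germs to an honest equivariant change of coordinates upstairs; it is here that normality of the quotient, the degree of the cover, and \autoref{th_Prill} must be combined carefully.
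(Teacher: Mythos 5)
Your opening reduction is correct and matches the paper's intent: by \eqref{eq-milnor}, $\mu^\w(f)=\frac{d-1}{d}$ if and only if the classical Milnor number vanishes, i.e.\ if and only if $\{f=0\}$ is smooth at the origin of $\CC^2$; and your proof that a smooth invariant germ is $\Q$-smooth (straightening the semi-invariant linear part to an axis by an equivariant change of coordinates) is sound --- this is exactly the picture the paper has in mind when it introduces \autoref{def-qsmooth} as formalizing ``curves that become an axis after an action-preserving change of coordinates'' and then states the corollary without proof. The problem is the converse, and it is precisely the step you yourself flag: the reduction of $g$ to a semi-invariant. Nothing in the hypothesis $\CC\{f,g\}^\qa=\CC\{x,y\}^\qa$ allows this a priori. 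Decomposing $g$ into isotypic pieces produces several semi-invariants, no one of which need satisfy the hypothesis together with $f$; and knowing that \emph{some} semi-invariant $g'$ works is tantamount to knowing that $\CC\{f,g\}=\CC\{x,y\}$, which is the conclusion you are trying to reach. So the route through an equivariant $\Phi$ and an isomorphism of quotient germs is circular at its first step; the later ingredients (\autoref{th_Prill}, lifting to the covers, deck transformations) are fine but never get to be applied.

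The repair is to drop equivariance altogether and argue with the rings themselves. Set $S=\CC\{x,y\}$, $R=S^{\qa_d}$, $T=\CC\{f,g\}$. Your dimension count shows $f,g$ are analytically independent, so $T\cong\CC\{u,v\}$ is regular, in particular normal; the hypothesis says exactly $R\subseteq T\subseteq S$. Since $S$ is finite over $R$ and $\mathrm{Frac}(S)/\mathrm{Frac}(R)$ is Galois with group $\qa_d$, the intermediate field $\mathrm{Frac}(T)$ equals $\mathrm{Frac}(S)^H$ for some subgroup $H\leq \qa_d$. Every element of $T$ lies in $S$, hence is integral over $R$, hence lies in the integral closure of $R$ in $\mathrm{Frac}(S)^H$, which is $S^H$; conversely every element of $S^H$ lies in $\mathrm{Frac}(T)$ and is integral over $T$, so normality of $T$ gives $S^H\subseteq T$. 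Therefore $T=S^H$. Now $T$ is regular, while $H$ is a subgroup of the normalized (hence small, acting freely off the origin) group $\qa_d$, so $H$ contains no pseudo-reflections; by \autoref{th_Prill} (equivalently, Chevalley--Shephard--Todd) $S^H$ can only be regular if $H$ is trivial. Hence $T=S$, the pair $(f,g)$ is an honest coordinate system, $f\notin\mathfrak{m}^2$, and your first paragraph closes the argument. This also explains, but only a posteriori, why the semi-invariant $g$ you wanted does exist.
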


\section{Local invariants on quotient singularities}
\label{sec-delta}

\subsection{Noether's Formula}
\mbox{}

\noindent
In this section we present a version of Noether's formula for curves on quotient singularities
and $\Q$-resolutions. During this section we will use the notation introduced in 
\S\ref{resolutions}. As an immediate consequence of 
\autoref{formula_self-intersection}\eqref{formula_self-intersection3} one has the following formula,

\begin{equation}\label{eqlemanoether}
(C \cdot D)_{\P}= \frac{\nu_{C}\nu_{D}}{p q d}+\sum_{Q\prec {\P}}(\widehat{C} \cdot \widehat{D})_Q,
\end{equation}
where $Q$ runs over all infinitely near points to ${\P}\in X(d;a,b)$ after a weighted $(p,q)$-blow-up.

By induction, using formula~\eqref{eqlemanoether}, one can prove a Noether's formula 
for $\Q$-divisors on quotient surface singularities.

\begin{thm}[Noether's Formula]\label{thm-noether}
Consider $C$ and $D$ two germs of $\Q$-divisors without common components at ${\P}$ a quotient surface 
singularity. Then the following formula holds:

$$(C \cdot D)_{\P}= \sum_{Q\prec {\P}}\frac{\nu_{C,{Q}}\nu_{D,{Q}}}{p q d},$$
where $Q$ runs over all the infinitely near points of $\P$ and $Q$ appears after a blow-up of type
$(p,q)$ of the origin in $X(d;a,b)$. 
\end{thm}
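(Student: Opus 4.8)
The plan is to iterate the one-step relation \eqref{eqlemanoether} and telescope the contributions. That relation, an immediate consequence of \autoref{formula_self-intersection}\eqref{formula_self-intersection3}, splits the local intersection number at $\P$ into a \emph{diagonal} term $\frac{\nu_{C,\P}\nu_{D,\P}}{pqd}$, produced by the first weighted $(p,q)$-blow-up at the point $\P$ of type $(d;a,b)$, plus the local intersection numbers $(\widehat C\cdot\widehat D)_Q$ at the finitely many infinitely near points $Q\prec\P$ on the exceptional divisor. The crucial structural input is that each such $Q$ lies again on a surface with abelian quotient singularities: by the explicit chart description in \eqref{eq-charts}, $Q$ is either smooth or a cyclic quotient point of a new type $(d';a',b')$. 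Hence \eqref{eqlemanoether} applies verbatim to the germs $(\widehat C,Q)$ and $(\widehat D,Q)$ after a further weighted blow-up at $Q$, with the weights and order now read off from the charts at $Q$.

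First I would set up the induction on the depth of the tower of blow-ups. Applying \eqref{eqlemanoether} at $\P$ and then again at each $Q\prec\P$ and substituting gives
\[
(C\cdot D)_\P=\frac{\nu_{C,\P}\nu_{D,\P}}{pqd}+\sum_{Q\prec\P}\frac{\nu_{\widehat C,Q}\nu_{\widehat D,Q}}{p_Qq_Qd_Q}+\sum_{Q'\prec Q\prec\P}(\widehat{\widehat C}\cdot\widehat{\widehat D})_{Q'},
\]
where the denominators carry the weights and orders attached to each point. Iterating, the diagonal terms accumulate into the desired sum $\sum_{Q\prec\P}\frac{\nu_{C,Q}\nu_{D,Q}}{pqd}$ ranging over all infinitely near points, while the remaining error is a sum of local intersection numbers of the strict transforms at points arbitrarily deep in the tower.

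The main obstacle is therefore termination: I must show this error vanishes after finitely many blow-ups, equivalently that $C$ and $D$ can be separated by a finite sequence of weighted blow-ups. Positivity of the summands alone does not suffice, since a priori an infinite sum of nonnegative rationals could still converge; the genuine input is the existence of an embedded $\Q$-resolution of $C\cup D$ (as guaranteed by the references in \S\ref{resolutions}), refined, if necessary, by blowing up the finitely many points where the strict transforms still cross, after which $\widehat C$ and $\widehat D$ become disjoint and every local intersection number at that final level is zero. Once this finiteness is in place the telescoping is exact and produces the stated formula; the only remaining care is the bookkeeping of the correct type $(d;a,b)$ and weights $(p,q)$ at each infinitely near point $Q$, both of which are furnished by the charts \eqref{eq-charts}.
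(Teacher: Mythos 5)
Your proposal is correct and follows essentially the same route as the paper: the paper likewise derives Noether's formula by induction on the one-step relation \eqref{eqlemanoether}, iterating the weighted blow-up decomposition until the contributions telescope into the stated sum. The only addition on your side is that you spell out the termination step (separating the strict transforms via a finite embedded $\Q$-resolution of $C\cup D$, after which all residual local intersection numbers vanish), which the paper leaves implicit in its one-line ``by induction'' argument.
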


\begin{rem}
\label{rem-noether}
Note that $p,q,d,a,$ and $b$ in \autoref{thm-noether} depend on $Q$ and its predecesor.
\end{rem}

\subsection{Definition of the $\delta$ invariant on quotient singularities}
\mbox{}

\noindent
In this section the local invariant $\delta^\w$ for curve singularities on $X(d;a,b)$ is defined.

\begin{dfn}
Let $C$ be a reduced curve germ at $\P\in X(d;a,b)$, then we define 
$\delta^\w$ as the number verifying 
\begin{equation}
 \label{eq-def-delta}
\chi^{\orb}(F_t^\w)=r^\w - 2 \delta^\w,
 \end{equation}
where $r^\w$ is the number of local branches of $C$ at ${\P}$, $F_t^\w$ denotes its Milnor fiber,
and $\chi^{\orb}(F_t^\w)$ denotes the orbifold Euler characteristic of~$F_t^\w$.
\end{dfn}

Using the same argument as in~\eqref{eq-milnor}, one can check that
\begin{equation}
\label{eq-deltaw}
\delta^\w=\frac{1}{d}\delta + \frac{1}{2} \left( r^\w-\frac{r}{d}\right)
\end{equation}
where $\delta$ denotes the classical $\delta$-invariant of $C$ as a germ in~$(\CC^2,0)$ and
$r$ is the number of local branches of $C$ in~$(\CC^2,0)$.

\begin{rem}
At this point it is worth mentioning that $r^\w$ and $r$ do not necessarily coincide. For instance,
$(x^2-y^4)$ defines an irreducible curve germ in $X(2;1,1)$ (hence $r^\w=1$), but it is not irreducible 
in $\CC^2$ (where $r=2$). One can check that $\delta^\w=1$, $\delta=2$ (see \autoref{deltapqab}), which
verify \eqref{eq-deltaw}.
\end{rem}

The purpose of this section is to give a recurrent formula for $\delta^\w$ based on a $\Q$-resolution
of the singularity. For technical reasons it seems more natural to use strong $\Q$-resolutions 
(see \autoref{rem-nc-qr}) for the statement, but this is no restriction (see \autoref{rem-qresolution}).

\begin{thm}
\label{thm-delta}
Let $(C,{\P})$ be a curve germ on an abelian quotient surface singularity. Then
\begin{equation}
\label{eq-delta}
\delta^\w=\frac{1}{2}\sum_{Q\prec {\P}} \frac{\nu_Q}{dpq}\left(\nu_Q-p-q+e \right),
\end{equation}
where $Q$ runs over all the infinitely near points of a strong $\Q$-resolution of $(C,{\P})$, 
$Q$ appears after a $(p,q)$-blow-up of the origin of $X(d;a,b)$, and 
$e:=\gcd(d,aq-bp)$.
\end{thm}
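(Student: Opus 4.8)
The plan is to prove \eqref{eq-delta} by induction on the number of weighted blow-ups in a strong $\Q$-resolution of $(C,\P)$, reducing the whole statement to a single one-step recurrence. Concretely, let $\pi\colon \widehat X\to X(d;a,b)$ be the $(p,q)$-weighted blow-up at $\P$, let $E$ be its exceptional divisor, $\widehat C$ the strict transform of $C$ (assumed reduced), and $\nu=\nu_{C,\P}$ the $(p,q)$-multiplicity. Writing $Q\prec_1\P$ for the points of $\widehat C\cap E$ in the first infinitesimal neighborhood, I would first establish the recurrence
\begin{equation}
\label{eq-plan-rec}
2\delta^\w(C,\P)=\frac{\nu}{dpq}\bigl(\nu-p-q+e\bigr)+\sum_{Q\prec_1\P}2\delta^\w(\widehat C,Q),
\end{equation}
and then iterate it. Since $\pi$ is an isomorphism outside $\P$, each $Q$ is again the origin of a quotient space $X(d_Q;a_Q,b_Q)$ and a strong $\Q$-resolution of $(C,\P)$ restricts to such resolutions of the $(\widehat C,Q)$, so the terms $\delta^\w(\widehat C,Q)$ are of the same nature and the induction is well posed. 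The base case is supplied by \autoref{rem-nc-qr}: in a strong $\Q$-resolution the strict transform eventually meets $E$ transversally at smooth points of $\widehat X$ where it is $\Q$-smooth, so $\delta^\w=0$ there and the recursion terminates; summing the one-step contributions over all centers $Q\prec\P$ then yields exactly \eqref{eq-delta}. Because the left-hand side $\delta^\w$ is intrinsically defined through the Milnor fiber, no separate check of independence of the resolution is required.

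To prove \eqref{eq-plan-rec} I would work with the defining relation $\chi^{\orb}(F_t^\w)=r^\w-2\delta^\w$ and split it into its two ingredients. First, the number of branches is additive: as $\widehat C\to C$ is an isomorphism over $C\setminus\{\P\}$, every local branch of $C$ lifts to a branch of $\widehat C$ through a unique $Q$, giving $r^\w(C,\P)=\sum_{Q}r^\w(\widehat C,Q)$ (and likewise for the ordinary count $r$). Thus \eqref{eq-plan-rec} is equivalent to the additivity of the orbifold Euler characteristic of the Milnor fiber with a correction term,
\begin{equation}
\label{eq-plan-euler}
\chi^{\orb}(F_t^\w;C,\P)=\sum_{Q}\chi^{\orb}(F_t^\w;\widehat C,Q)-\frac{\nu}{dpq}\bigl(\nu-p-q+e\bigr).
\end{equation}
I would obtain \eqref{eq-plan-euler} by an A'Campo-type computation: recalling $\chi^{\orb}(F_t^\w)=\tfrac1d\chi(F_t)$ with $F_t=\{f=t\}\subset\CC^2$ (\S\ref{sec-milnor}), the orbifold Milnor fiber is controlled by the ordinary one upstairs, and stratifying $F_t$ according to the components of $\pi^{-1}(C)$ expresses $\chi^{\orb}(F_t^\w)$ as the contribution localized at the points $Q$ plus a contribution supported over the open part of $E$. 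The latter is the correction term, and it is here that the orbifold structure of $E\cong\PP^1_\w(d;a,b)$ and its two cyclic quotient points enter.

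To identify the correction as $-\tfrac{\nu}{dpq}(\nu-p-q+e)$ I would feed in the intersection data of \autoref{formula_self-intersection}: the numbers $E\cdot\widehat C=\tfrac{e\nu}{pqd}$ (part (2)) and $E^{2}=-\tfrac{e^{2}}{pqd}$ (from part (1) together with $\pi^{*}C\cdot E=0$) measure, respectively, how $\widehat C$ meets $E$ and the orbifold self-intersection of $E$, while the combination $p+q-e$ is the discrepancy-type quantity attached to the $(p,q)$-blow-up of $X(d;a,b)$. As a consistency check I would specialize to the classical case: for $d=1$, $p=q=1$ one has $e=\gcd(1,a-b)=1$, so the one-step contribution becomes $\nu(\nu-1)$, recovering \eqref{eq-delta-form}; more generally, combining \eqref{eq-deltaw} with the classical recurrence for $\delta$ and the branch additivity above pins down the correction uniquely and matches the claimed value. (One checks on the cusp $\{x^2=y^3\}$ with weights $(3,2)$, where $\nu=6$, $d=e=1$, that the contribution is $\tfrac{6\cdot 2}{6}=2=2\delta$.)

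The main obstacle I anticipate is precisely the additivity \eqref{eq-plan-euler} with the sharp correction term: one must show that the part of the Milnor fiber lying over the punctured exceptional divisor contributes exactly $\tfrac{\nu}{dpq}(\nu-p-q+e)$ to $\chi^{\orb}$, correctly weighting the two quotient singular points of $\widehat X$ on $E$ and reconciling the orbifold count with the discrepancy between $r$ and $r^\w$. Note that this subtlety is unavoidable, since even when one starts from a smooth point the weighted blow-up produces quotient singularities on $E$, forcing the recursion to run through the orbifold invariant $\delta^\w$ rather than the classical one. Once this local Euler-characteristic bookkeeping is in place, the remainder is the routine induction described above together with the elementary arithmetic of $p$, $q$, $d$ and $e$.
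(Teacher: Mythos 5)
Your plan has exactly the architecture of the paper's proof: induction over the infinitely near points of a strong $\Q$-resolution, reduction of \eqref{eq-delta} to the one-step Euler-characteristic identity
\[
\chi^{\orb}(F_t^\w)=-\frac{\nu}{dpq}\left(\nu-p-q+e\right)+\sum_i\chi^{\orb}(\tilde F_i),
\]
where $\tilde F_i$ denote the strict transforms at the points of the first infinitesimal neighborhood (this is your recurrence, and it is literally the displayed identity at the heart of the paper's argument), together with branch additivity $r^\w(C,\P)=\sum_Q r^\w(\widehat C,Q)$ and the base case that after a strong $\Q$-resolution each strict transform is a smooth branch transverse to $E$ at a smooth point of the ambient space, so $\sum_i\chi^{\orb}(\tilde F_i)=r^\w$. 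So the skeleton is right and matches the paper.

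The genuine gap is that you never establish the recurrence: you explicitly defer it as ``the main obstacle,'' and neither of your two substitutes is a derivation. The intersection numbers $E\cdot\widehat C=\frac{e\nu}{pqd}$ and $E^2=-\frac{e^2}{pqd}$ do not by themselves produce the correction term; one must actually decompose the Milnor fiber after the blow-up, and this is where all the work lies. The paper does it concretely: outside neighborhoods $\BB_i$ of the two quotient points $P_1,P_2$ of $\widehat X$ on $E$ and of the points of $E\cap\widehat C$, the preimage of the Milnor fiber is a $\frac{\nu}{e}$-fold covering of $E$; near a smooth point the pushing trick gives $\chi^{\orb}(\BB_i\cap\{x^{d\nu/e}\tilde F_i=t\})=\chi((\tilde F_i')_t)-(E,\tilde F_i)_{P_i}$; near the quotient point $P_1$ one lifts to the $\CC^2$-cover of the chart $X(\frac{pd}{e};1,\frac{-q+a'pb}{e})$, pushes there, and descends to get $\frac{\nu}{e}\bigl(\frac{e}{pd}-(E,\tilde F_1)_{P_1}\bigr)$, with the analogous weight $\frac{e}{qd}$ at $P_2$; summing and using $\sum_i(E,\tilde F_i)_{P_i}=E\cdot\widehat C$ from \autoref{formula_self-intersection} yields precisely $-\frac{\nu}{dpq}(\nu-p-q+e)$. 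Some such local computation, with the orbifold weights at $P_1,P_2$, is unavoidable and is absent from your proposal. Your fallback --- that \eqref{eq-deltaw} plus ``the classical recurrence for $\delta$'' pins the correction down uniquely --- is circular: \eqref{eq-delta-form} concerns ordinary blow-ups, whereas what you need is the $d=1$, arbitrary $(p,q)$ case of the very theorem being proved; moreover, the claim that the defect $2\delta^\w(C,\P)-\sum_Q 2\delta^\w(\widehat C,Q)$ is a universal function of $(d,a,b,p,q,\nu)$ alone (so that examples could determine it) is itself something only the Milnor-fiber decomposition establishes. A minor further imprecision: the correction is not purely ``supported over the open part of $E$''; it also absorbs the intersection corrections $-(E,\tilde F_i)_{P_i}$ located at the points $Q$ themselves.
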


A similar comment to \autoref{rem-noether} applies to \autoref{thm-delta}.

\begin{proof}
Since we want to proceed by induction, let us assume $\P\in X(d;a,b)$. 
After a $(p,q)$-blow-up of $\P$ there are three types of infinitely near points to $\P$, 
namely, $P_1$ (resp.~$P_2$) a point on the surface of local type 
$X(\frac{pd}{e};1,\frac{-q+a' pb}{e})$ (resp. $X(\frac{qd}{e};\frac{-p+b' qa}{e},1)$)
and $P_3,\dots,P_n$ smooth points on the surface (see \eqref{eq-charts}). 
Outside the neighborhoods $\BB_i$ of the points $P_i$, the preimage of the Milnor 
fiber is a covering of degree~$\frac{\nu}{e}$ over $E$.

Therefore
$$\chi^{\orb}(F_t^\w)=\chi^{\orb}(E\setminus \{P_1,\dots,P_n\}) \frac{\nu}{e} + 
\sum_i \chi^{\orb}(\BB_i\cap \{x^{\frac{d\nu}{e}}\tilde F_i=t\}),$$
since the intersection is glued over disks, whose Euler characteristic is zero.
Here $\tilde F_i$ denotes the strict preimage of $F$ at~$P_i$.

In order to compute $\chi^{\orb}(\BB_i\cap \{x^{\frac{d\nu}{e}}\tilde F_i=t\})$
we have to distinguish three cases: $P_1,P_2$, and $P_i$ ($i=3,\dots,n$).

Assume first that $P=P_i$, $i=3,\dots,n$, then one can push $\tilde F_i$ into 
$\tilde F'_i$ in a direction transversal to $E$ as in \autoref{fig-push}, then 
$\chi^{\orb}(\BB_i\cap \{x^{\frac{d\nu}{e}}\tilde F_i=t\})=
\chi((\tilde F'_i)_t) - (E,\tilde F_i)_{P_i}$.

In case $P=P_1\in X(\frac{pd}{e};1,\frac{-q+a' pb}{e})$, one has 
$\chi^{\orb}(\BB_1\cap \{x^{\frac{d\nu}{e}}\tilde F_1=t\})=
\frac{e}{pd}\chi(\BB_1\cap \{x^{\frac{d\nu}{e}\frac{1}{d}}\tilde f_1=t\})$,
where $\tilde f_1$ is the preimage of $\tilde F_1$ in $\CC^2$. Therefore,
after applying the pushing strategy, 
$\chi(\BB_1\cap \{x^{\frac{\nu}{e}}\tilde f_1=t\})=
\frac{\nu}{e}\left( 1 - (E,f_1)_{P_1}\right)$,
and hence
$$
\chi^{\orb}(\BB_1\cap \{x^{\frac{d\nu}{e}}\tilde F_1=t\})=
\frac{\nu}{e}\left( \frac{e}{pd} - (E,\tilde F_1)_{P_1}\right).
$$
One obtains an analogous formula for $P_2$. Adding up all the terms and applying
\autoref{formula_self-intersection}\eqref{formula_self-intersection2} one obtains:
$$
\chi^{\orb}(F_t^\w)=\frac{\nu}{dp}+\frac{\nu}{dq}-\frac{e\nu}{dpq}(1+\frac{\nu}{e})
+\sum_i \chi^{\orb}(\tilde F_i)=
-\frac{\nu}{dpq}(\nu-p-q+e)+\sum_i \chi^{\orb}(\tilde F_i).
$$
The formula follows by induction since, after a strong $\Q$-resolution, 
$\sum_i \chi^{\orb}(\tilde F_i)=r^\w$ and hence
\begin{equation*}
\chi^{\orb}(F_t^\w)=
r^\w - \sum \frac{\nu}{dpq}(\nu-p-q+e)=r^\w-2\delta^\w. 
\end{equation*}
\begin{figure}
\begin{center}
\includegraphics[scale=1]{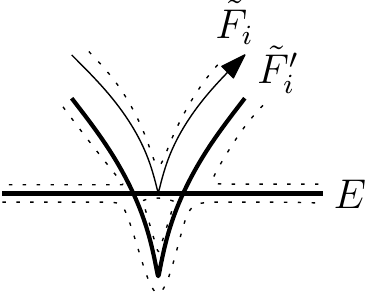}
\caption{Pushing $\tilde F_i$}
\label{fig-push}
\end{center}
\end{figure}
\end{proof}

\begin{exam}\label{deltapqab}
Assume $x^p-y^q=0$ defines a curve on a surface singularity of type $X(d;a,b)$. 
Note that a simple $(q,p)$-blow-up will be a (strong) $\Q$-resolution of the singular point. 
Therefore, using \autoref{thm-delta} one obtains
$$
\delta^\w=\frac{\nu}{2dqp}(\nu-q-p+e).
$$
Note that $\nu=pq$. Also, since $x^p-y^q=0$ defines a set of zeros in $X(d;a,b)$
by hypothesis, this implies $ap\equiv bq \mod d$ and hence $e:=\gcd(d,ap-bq)=d$. Thus
\begin{equation}
\label{eq-pq}
\delta^\w=\frac{(pq-p-q+d)}{2d}.
\end{equation}
Note that this provides a direct proof, for the classical case $(d=1)$, that 
$$\delta=\frac{(p-1)(q-1)}{2}$$
for a singularity of type $x^p-y^q$ in $(\CC^2,0)$.

Another direct consequence of~\eqref{eq-pq} is that 
\begin{equation}
\label{eq-delta-qsmooth}
\delta^\w(x)=\delta^\w(y)=\frac{d-1}{2d}
\end{equation}
and the same formula holds for any $\Q$-smooth curve (see \autoref{def-qsmooth}).
\end{exam}

\begin{rem}
\label{rem-qresolution}
By~\eqref{eq-delta-qsmooth}, a $\Q$-resolution is enough to obtain $\delta^\w$. Note that 
a $\Q$-resolution end when the branches are separated and each strict transform is a 
$\Q$-smooth curve. Therefore if \eqref{eq-delta} is applied for a $\Q$-resolution, one
needs to add $\frac{d_i-1}{2d_i}$ for each local branch $\gamma_i\subset X(d_i;a_i,b_i)$, 
$i=1,\dots,r^\w$. 
\end{rem}

\begin{cor}
Let $C$ and $D$ be two reduced $\Q$-divisors at ${\P}\in X(d;a,b)$ without common components. 
Then
$$\delta^\w(C\cdot D)=\delta^\w(C)+\delta^\w(D)+(C \cdot D)_{\P}.$$
\end{cor}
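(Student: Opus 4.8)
The plan is to deduce the formula directly from the two summation results just established, \autoref{thm-noether} and \autoref{thm-delta}, applied to the reduced germ $C\cdot D = C\cup D = \{fg=0\}$, where $C=\{f=0\}$ and $D=\{g=0\}$. First I would fix a \emph{single} strong $\Q$-resolution $\pi$ of $C\cdot D$ and work throughout with its infinitely near points $Q\prec\P$. The key observation is that the $(p,q)$-multiplicity is additive under unions: since $C\cdot D$ has defining function $fg$ and the $(p,q)$-multiplicity is the weighted order of the defining function, one has $\nu_{C\cdot D,Q}=\nu_{C,Q}+\nu_{D,Q}$ at every $Q$, with the convention $\nu_{C,Q}=0$ when the strict transform of $C$ misses $Q$. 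Moreover the integers $p,q,d,a,b$ and $e=\gcd(d,aq-bp)$ attached to $Q$ depend only on $Q$ and its predecessor, not on the curve (cf.~\autoref{rem-noether}), so that, abbreviating $\nu_\bullet:=\nu_{\bullet,Q}$ and $s_Q:=-p-q+e$, the elementary identity
$$\nu_{C\cdot D}(\nu_{C\cdot D}+s_Q)=\nu_C(\nu_C+s_Q)+\nu_D(\nu_D+s_Q)+2\,\nu_C\nu_D$$
holds term by term.

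Next I would apply \autoref{thm-delta} to $C\cdot D$ and substitute this identity into
$$\delta^\w(C\cdot D)=\frac{1}{2}\sum_{Q\prec\P}\frac{\nu_{C\cdot D,Q}}{dpq}\bigl(\nu_{C\cdot D,Q}+s_Q\bigr),$$
splitting the result into three sums. The cross term $\sum_{Q}\frac{\nu_{C,Q}\nu_{D,Q}}{dpq}$ is exactly $(C\cdot D)_\P$ by \autoref{thm-noether}. For the two square terms I would argue that the fixed strong $\Q$-resolution of $C\cdot D$ is simultaneously a strong $\Q$-resolution of $C$ alone: indeed $\pi^{-1}(C)$ inherits $\Q$-normal crossings from $\pi^{-1}(C\cdot D)$, and the strict transform of $C$, being contained in that of $C\cdot D$, avoids the singular locus. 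Since $\delta^\w$ is an intrinsic invariant of the germ (defined through the Milnor fiber in~\eqref{eq-def-delta}), \autoref{thm-delta} returns the same value for \emph{any} strong $\Q$-resolution; applying it to $C$ through $\pi$, and noting that the terms with $\nu_{C,Q}=0$ drop out, gives $\frac{1}{2}\sum_{Q}\frac{\nu_{C,Q}}{dpq}(\nu_{C,Q}+s_Q)=\delta^\w(C)$, and symmetrically for $D$. Collecting the three pieces yields the claimed equality.

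The step I expect to require the most care is precisely this claim that one fixed strong $\Q$-resolution of the union computes $\delta^\w(C)$ and $\delta^\w(D)$ through \autoref{thm-delta}. This rests on two points I would want to state cleanly: the additivity $\nu_{C\cdot D,Q}=\nu_{C,Q}+\nu_{D,Q}$, with the bookkeeping at points lying on only one of the two strict transforms, and the independence of formula~\eqref{eq-delta} from the chosen strong $\Q$-resolution --- the resolution of the union is generically non-minimal for each factor, so one genuinely needs that \autoref{thm-delta} holds for every, not merely the minimal, strong $\Q$-resolution. Should one prefer to avoid invoking this resolution-independence, an alternative route is to descend to $\CC^2$ via~\eqref{eq-deltaw}, use the classical additivity $\delta(C\cdot D)=\delta(C)+\delta(D)+(C\cdot D)_0$ together with the additivity of the branch counts $r^\w$ and $r$, and finally identify $\frac{1}{d}(C\cdot D)_0$ with $(C\cdot D)_\P$ through the definition of the local intersection number in \S\ref{computation_local_number_V-surface}; there the delicate point becomes instead the comparison of the invariant-ring colength with the downstairs intersection multiplicity.
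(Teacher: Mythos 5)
Your proposal is correct and is essentially the paper's own proof: the paper likewise expands $\nu_{C\cdot D}(\nu_{C\cdot D}-p-q+e)$ via $\nu_{C\cdot D}=\nu_{C}+\nu_{D}$ into the two square terms plus $2\nu_{C}\nu_{D}$, divides by $2dpq$, sums over the infinitely near points, and identifies the cross term as $(C\cdot D)_{\P}$ by \autoref{thm-noether} and the square terms as $\delta^\w(C)$ and $\delta^\w(D)$. The only difference is that you spell out the bookkeeping the paper leaves implicit --- that a strong $\Q$-resolution of the union is simultaneously one for each factor, and that formula~\eqref{eq-delta} is independent of the chosen strong $\Q$-resolution --- which is added care, not a different argument.
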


\begin{proof}
 One has,
\begin{equation}
\label{ecu}
\array{c}
\nu_{C\cdot D} (\nu_{C\cdot D}-p-q+e)=(\nu_{C}+\nu_{D})(\nu_{C}+\nu_{D}-p-q+e)=\\[0.2cm]
=\nu_{C}(\nu_{C}-p-q+e)+\nu_{D}(\nu_{D}-p-q+e)+2\nu_{C}\nu_{D}.
\endarray
\end{equation}

Dividing \eqref{ecu} by $2dpq$ and making the addition over all the infinitely near points to ${\P}$ one obtains,
\begin{equation*}
\delta^\w(C\cdot D)=\delta^\w(C)+\delta^\w(D)+\sum_{Q\prec {\P}}
\frac{\nu_{C,{Q}}\nu_{D,{Q}}}{p q d}=\delta^\w(C)+\delta^\w(D)+(C \cdot D)_{\P}. 
\end{equation*}
\end{proof}

\subsection{The $\delta^\w$ invariant as the dimension of a vector space}
\mbox{}

\noindent
In the classical case this invariant can be interpreted as the dimension of a vector space. 
Since $\delta^\w$ is in general a rational number, a similar result can only be expected in 
certain cases, namely, when associated with Cartier divisors. This section is devoted to proving 
this fact. 

Let us start with the following constructive result which allows one to see any singularity 
on the quotient $X(d;a,b)$ as the strict transform of some $\{g=0\} \subset \CC^2$ after 
performing a certain weighted blow-up.

\begin{lemma}\label{lema1}
Let $f \in \CC\{x\}[y]$ defining an analytic function germ on $X(p;-1,q)$, $\gcd(p,q)=1$, 
such that $x \nmid f$. Then there exist $g \in \CC\{x\}[y]$ with $x \nmid g$ and $\nu \in \NN$ multiple of $p$
such that $g(x^p,x^q y) = x^{\nu} f(x,y)$. Moreover, $f$ is reduced (resp.~irreducible) 
if and only if $g$~is.
\end{lemma}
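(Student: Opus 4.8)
The plan is to produce $g$ explicitly from the Taylor expansion of $f$ and then read off its analytic properties geometrically. First I would write $f = \sum_{i,j} c_{ij}\, x^i y^j$ with $c_{ij} \in \CC$, $i \ge 0$ and $j$ bounded (as $f$ is a polynomial in $y$). The hypothesis that $f$ is invariant under the action defining $X(p;-1,q)$, namely $(x,y) \mapsto (\xi^{-1} x, \xi^q y)$ for $\xi \in \qa_p$, translates into the numerical condition that $c_{ij} = 0$ unless $i \equiv qj \pmod p$. I then set $\nu := \max\{qj - i : c_{ij} \neq 0\}$; this maximum is finite because $j$ is bounded and $i \ge 0$, and it is non-negative because $x \nmid f$ forces a term with $i = 0$ (hence $qj - i = qj \ge 0$). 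Since every admissible pair satisfies $i - qj \equiv 0 \pmod p$, the value $\nu = qj_0 - i_0$ attained at some pair is a multiple of $p$, as required.

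Next I would define $g(x,y) := \sum_{i,j} c_{ij}\, x^{(i+\nu-qj)/p}\, y^{j}$. Each exponent $(i+\nu-qj)/p$ is a non-negative integer: it is an integer because $p \mid (i-qj)$ and $p \mid \nu$, and it is $\ge 0$ by the definition of $\nu$ as a maximum. Thus $g \in \CC\{x\}[y]$, of the same $y$-degree as $f$ and convergent in $x$ since $f$ converges. The defining relation is a one-line substitution: $g(x^p, x^q y) = \sum_{i,j} c_{ij}\, x^{i+\nu-qj}\,(x^q y)^j = x^\nu \sum_{i,j} c_{ij}\, x^i y^j = x^\nu f(x,y)$. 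Moreover $x \nmid g$, because the pair realizing the maximum contributes a monomial of $x$-exponent $0$; this is exactly why the maximum, rather than a larger multiple of $p$, is the correct choice. One point that must be emphasized is that the moreover-claim has to be read with $f$ regarded as a curve germ on the quotient $X(p;-1,q)$ --- equivalently, as an element of the invariant ring $\CC\{x,y\}^{\qa_p}$ --- and \emph{not} in $\CC\{x,y\}$: for instance $f = x^p - y^p$ is invariant and splits into $p$ lines in $\CC\{x,y\}$, while the associated $g$ can be irreducible.

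With that interpretation fixed, I would prove the equivalence geometrically. The assignment $[(x,y)] \mapsto (x^p, x^q y)$ is precisely the map $\bar\phi \colon X(p;-1,q) \to \CC^2$ obtained by restricting the weighted blow-up of \S\ref{resolutions} to the chart $U_1$; it is an isomorphism over $\CC^2 \setminus \{0\}$ and contracts the exceptional divisor $E = \{x=0\}$ to the origin. By the relation above, $f$ is the strict transform of $\{g=0\}$ in this chart (the factor $x^\nu$ being supported on $E$), and the conditions $x \nmid f$, $x \nmid g$ guarantee that neither curve acquires $E$ as a component. Since $\bar\phi$ is an isomorphism away from $E$, it induces a multiplicity-preserving bijection between the branches of $(\{g=0\},0)$ and the branches of $(\{f=0\},\P)$ on $X(p;-1,q)$; concretely, away from the exceptional set $\bar\phi$ restricts to an \'etale Galois $\qa_p$-cover, so branches of $g$ correspond to $\qa_p$-orbits of branches of $f$ in $\CC^2$, which are exactly the branches of $f$ on the quotient. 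The irreducible case is then the statement that there is a single such branch, and the reduced case follows since the bijection preserves the multiplicity of each component. The main obstacle is not the computation but pinning down this correct notion of (ir)reducibility and checking carefully that the strict-transform correspondence through the contracted divisor $E$ is a genuine bijection on branches; once the quotient interpretation is in place, the equivalence is essentially the fact that a weighted blow-up does not alter the local branch structure away from the exceptional set.
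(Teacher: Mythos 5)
Your proposal is correct and takes essentially the same approach as the paper: you construct $g$ by the same explicit formula (the invariance condition $i\equiv qj \pmod p$ plus the maximal shift $\nu$, which coincides with the paper's minimal exponent $\ell$ via $\nu=p\ell$), and you justify the \emph{moreover} part exactly as the paper does, by the birationality of the weighted blow-up chart $[(x,y)]\mapsto(x^p,x^qy)$. Your added precision that (ir)reducibility of $f$ must be read on the quotient $X(p;-1,q)$, i.e.\ in $\CC\{x,y\}^{\qa_p}$ rather than in $\CC\{x,y\}$ (cf.\ your example $x^p-y^p$), is a clarification the paper leaves implicit, not a different argument.
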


\begin{proof} 
Let us write $f$ as a finite sum of monomials, $f=\sum_{i,j} a_{ij} x^i y^j$. Since every monomial of 
$f$ is $\qa_p$-invariant, $p$ divides $-i+qj$ for all $i,j$. Consider
$$
g_1(x,y) = \sum_{i,j} a_{ij} x^{\frac{i-qj}{p}} y^j \ \in \ \CC\{x^{\pm 1}\}[y]
$$
and take the minimal non-negative integer $\l$ such that $g(x,y) := x^\l g_1(x,y)$ is an element of $\CC\{x\}[y]$. 
Note that $\l$ exists because $f$ is a polynomial in $y$.

Then $g(x^p,x^q y) = x^{p \l} f(x,y)$ and the minimality of $\l$ ensures that $x \nmid g$. 
The final part of the statement is a consequence of the fact that the $(p,q)$-blowing-up is a 
birational morphism.
\end{proof}

Let $I_{p,q} := \overline{\langle x^q, y^p \rangle} \subset \CC \{ x, y \}$ be the 
integral closure of $\langle x^q, y^p \rangle$ in $\CC \{ x, y \}$ with $\gcd(p,q)=1$. 
Note that the integral closure of a monomial ideal is well understood. In fact, the exponent set of the integral 
closure of a monomial ideal equals all the integer lattice points in the convex hull of the exponent set of such 
ideal, (see for instance~\cite[Proposition~1.4.6]{huneke-swanson}). 
In our case, $I_{p,q}^n = \overline{\langle x^q, y^p \rangle^n}$ and the following property 
holds for any $h \in \CC\{x,y\}$,
\begin{equation}\label{property_Ipq}
h \in I_{p,q}^n \ \Longleftrightarrow \ \nu_{p,q}(h) \geq pqn.
\end{equation}

\begin{lemma}\label{lemma_dim_I+f}
Assume $f \in \CC\{x,y\}$ has $(p,q)$-order $\nu$ a multiple of $pq$. Then, for all 
$n \geq \frac{\nu}{pq}$, one has
$$
\dim_\CC \left( \frac{\CC\{x,y\}}{I_{p,q}^n+ \langle f \rangle} \right) = n \nu - \frac{\nu (\nu - p -q +1)}{2pq}.
$$
\end{lemma}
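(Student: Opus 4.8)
The plan is to compute the dimension by understanding the structure of $\CC\{x,y\}/I_{p,q}^n$ first, and then quotient further by $f$. The key observation is that $I_{p,q}^n = \overline{\langle x^q,y^p\rangle^n}$ is a monomial ideal whose exponent set, by property~\eqref{property_Ipq}, consists of all lattice points $(i,j)$ with $qi + pj \geq pqn$, i.e.\ those of $(p,q)$-order at least $pqn$. Therefore the monomials \emph{not} in $I_{p,q}^n$ are exactly those $x^i y^j$ with $qi+pj < pqn$, and a $\CC$-basis of $\CC\{x,y\}/I_{p,q}^n$ is given by this finite set of monomials. My first step is thus to count these lattice points: I would compute
\[
\dim_\CC\left(\frac{\CC\{x,y\}}{I_{p,q}^n}\right)=\#\{(i,j)\in\NN^2 \mid qi+pj< pqn\}.
\]

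The second step is to pass from $I_{p,q}^n$ to $I_{p,q}^n+\langle f\rangle$. The natural tool is the exact sequence
\begin{equation*}
\frac{\CC\{x,y\}}{(I_{p,q}^n : f)} \xrightarrow{\ \cdot f\ } \frac{\CC\{x,y\}}{I_{p,q}^n} \longrightarrow \frac{\CC\{x,y\}}{I_{p,q}^n+\langle f\rangle}\longrightarrow 0,
\end{equation*}
so that the desired dimension equals $\dim_\CC(\CC\{x,y\}/I_{p,q}^n) - \dim_\CC(\CC\{x,y\}/(I_{p,q}^n:f))$, \emph{provided} multiplication by $f$ is injective on the source, which holds since $f$ is a nonzerodivisor. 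The heart of the argument is then to identify the colon ideal $(I_{p,q}^n:f)$. Because $f$ has $(p,q)$-order exactly $\nu=pqm$ (where $m=\nu/pq$), multiplying by $f$ raises $(p,q)$-order by exactly $\nu$; I expect that $(I_{p,q}^n:f)=I_{p,q}^{\,n-m}$, so that its quotient has dimension equal to the lattice-point count with threshold $pq(n-m)$. This reduces everything to the same counting problem at two different levels.

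The third step is to evaluate the lattice-point counts explicitly and take the difference. Writing $N(k):=\#\{(i,j)\in\NN^2 \mid qi+pj<pqk\}$, I would use the standard formula for counting lattice points under a line with coprime slope data; the count $N(k)$ is a quasi-polynomial in $k$ whose leading term is the area $\tfrac{1}{2}pq k^2$ of the triangle, corrected by boundary and Pick-type terms involving $p$, $q$, and $\gcd(p,q)=1$. The difference $N(n)-N(n-m)$ should then collapse to the clean expression $n\nu-\tfrac{\nu(\nu-p-q+1)}{2pq}$ after substituting $\nu=pqm$ and simplifying, with the $-p-q+1$ correction arising precisely from the coprimality and the number of boundary lattice points.

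The main obstacle I anticipate is twofold. First, justifying the identity $(I_{p,q}^n:f)=I_{p,q}^{\,n-m}$ rigorously: the containment $I_{p,q}^{\,n-m}\subseteq(I_{p,q}^n:f)$ is clear from the order estimate via~\eqref{property_Ipq}, but the reverse inclusion requires showing that if $fh\in I_{p,q}^n$ then $h\in I_{p,q}^{\,n-m}$, which uses that the initial form of $f$ with respect to the $(p,q)$-weighting is not a zero divisor modulo the initial ideal, or equivalently a flatness/associated-graded argument. Second, the lattice-point counting must be done with care for general coprime $p,q$, since the quasi-polynomial correction terms are exactly what produce the $\tfrac{\nu(\nu-p-q+1)}{2pq}$ term; getting the boundary contributions right is where all the arithmetic subtlety lives, and it is reassuring that the final answer is independent of $n$ beyond the linear term $n\nu$, which serves as a strong consistency check on the computation.
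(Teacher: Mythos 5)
Your proposal is correct and takes essentially the same approach as the paper. The paper packages your colon-ideal step as the short exact sequence $0 \to \CC\{x,y\}/I_{p,q}^{n-\nu/pq} \overset{\cdot f}{\longrightarrow} \CC\{x,y\}/I_{p,q}^{n} \longrightarrow \CC\{x,y\}/(I_{p,q}^{n}+\langle f \rangle) \to 0$ (whose injectivity is exactly your identity $(I_{p,q}^n : f)=I_{p,q}^{n-\nu/pq}$, following from~\eqref{property_Ipq} together with additivity of the $(p,q)$-order), and carries out your lattice-point count via Pick's theorem, obtaining $F(n):=\dim_\CC\bigl(\CC\{x,y\}/I_{p,q}^n\bigr)=pq\,\frac{n(n+1)}{2}-\frac{(p-1)(q-1)}{2}\,n$ and the stated formula as the difference $F(n)-F\bigl(n-\frac{\nu}{pq}\bigr)$.
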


\begin{proof}
Since $pq \,|\, \nu$, by~\eqref{property_Ipq}, the multiplication by $f$ gives rise to the short 
exact sequence for all $n \geq \frac{\nu}{pq}$,
\begin{equation*}
0 \longrightarrow \frac{\CC\{x,y\}}{I_{p,q}^{n-\frac{\nu}{pq}}} \overset{\cdot f}{\longrightarrow} 
\frac{\CC\{x,y\}}{I_{p,q}^n} \longrightarrow \frac{\CC\{x,y\}}{I_{p,q}^n+ \langle f \rangle} \longrightarrow 0.
\end{equation*}

Note that, by virtue of Pick's Theorem and~\eqref{property_Ipq}, the dimension of the second vector 
space above is given by the formula
\begin{equation*}
F(n) := \dim_\CC \left( \frac{\CC\{x,y\}}{I_{p,q}^n} \right) = pq\, \frac{n(n+1)}{2}-\frac{(p-1)(q-1)}{2}\, n.
\end{equation*}

Finally, the required dimension is $F(n) - F(n-\frac{\nu}{pq})$ and the claims follow.
\end{proof}

Although the following results can be stated in a more general setting, considering a direct 
sum of vector spaces in the ring $R^1$ (see below), we proceed in this way for the sake of simplicity.

Let $f:(\CC^2,0)\to (\CC,0)$ be an analytic function germ and $C:= \{ f = 0 \}$. Consider the weighted 
blow-up at the origin with $\gcd(p,q)=1$. Assume the exceptional divisor $E$ and the strict transform 
$\widehat{C}$ intersect just at the origin of the first chart $X(p;-1,q)$ and the latter divisor is 
given by a well-defined function $\widehat{f}$ on the quotient space. Also denote by 
$R = \frac{\CC\{x,y\}}{\langle f \rangle}$ and $R^{1} = \frac{\CC\{x,y\}^{\qa_p}}{\langle \widehat{f} \rangle}$ 
their corresponding local rings.

\begin{rem}\label{weierstrass_remark}
The Weierstrass division theorem states that given $f,g \in \CC\{x,y\}$ with $f$ $y$-general of order $k$, 
there exist $q \in \CC\{x,y\}$ and $r \in \CC\{x\}[y]$ of degree in $y$ less than or equal to $k-1$, both 
uniquely determined by $f$ and $g$, such that $g = q f + r$. The uniqueness and the linearity of the action 
ensure that the division can be performed in $\CC\{x,y\}^{\qa_d}$, i.e.~if $f$ and $g$ are $\qa_d$-invariant, 
then so are $q$ and~$r$. In other words, the Weierstrass preparation theorem still holds in $\CC\{x,y\}^{\qa_d}$.
\end{rem}

Assume $f$ is a Weierstrass polynomial in $y$ of degree $b$ with $\nu:=\nu_{p,q}(f) = qb$. Then its strict 
transform $\widehat{f}$ is again a Weierstrass polynomial in $y$ of the same degree. Classical arguments using 
the Weierstrass division theorem, see for instance \cite[Theorem~1.8.8]{casas-singularities}, provide the following 
isomorphisms $R \simeq \frac{\CC\{x\}[y]}{\langle f \rangle}$ and $R^1 \simeq 
\frac{(\CC\{x\}[y])^{\qa_p}}{\langle \widehat{f} \rangle}$, which allow one to prove that the pull-back morphism
\begin{eqnarray*}
\varphi: \ R & \longrightarrow & R^1 \\
h(x,y) & \mapsto & h(x^p,x^qy)
\end{eqnarray*}
is in fact injective. Hereafter $R$ is identified with a subring of $R^1$ and thus one just simply writes $R \subset R^1$.

\begin{lemma}\label{lemaideal} 
For all $n \gg 0$,
$$
R\, I_{p,q}^n = R^1 x^{pqn}.
$$
\end{lemma}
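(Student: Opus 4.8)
My plan is to establish the two inclusions separately, after first describing $R^1$ as a module over the subring $\CC\{x^p\}=\varphi(\CC\{x\})\subseteq R$. Since $\widehat f$ is a $\qa_p$-invariant Weierstrass polynomial of degree $b$ in $y$, Weierstrass division performed inside the invariant ring (\autoref{weierstrass_remark}) shows that every class of $R^1\simeq (\CC\{x\}[y])^{\qa_p}/\langle\widehat f\rangle$ has a unique invariant representative of $y$-degree $<b$. Spelling out the invariance condition $i\equiv qj\pmod p$ for a monomial $x^iy^j$, such a representative is a $\CC\{x^p\}$-linear combination of the monomials $m_j:=x^{\rho_j}y^j$, where $0\le\rho_j<p$ is determined by $\rho_j\equiv qj\pmod p$. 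Thus $R^1=\bigoplus_{j=0}^{b-1}\CC\{x^p\}\,m_j$ is a free $\CC\{x^p\}$-module with a monomial basis, and in particular $R^1x^{pqn}=\sum_{j=0}^{b-1}\CC\{x^p\}\,(x^{pqn}m_j)$.

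For the inclusion $R\,I_{p,q}^n\subseteq R^1x^{pqn}$ I would argue monomial by monomial. By \eqref{property_Ipq} the ideal $I_{p,q}^n$ is spanned by the monomials $x^ay^c$ with $pa+qc\ge pqn$, and $\varphi(x^ay^c)=x^{pa+qc}y^c$. As $pa+qc\ge pqn$, this equals $x^{pqn}$ times the monomial $x^{pa+qc-pqn}y^c$, whose $x$-exponent is again $\equiv qc\pmod p$; hence the latter monomial is $\qa_p$-invariant and lies in $R^1$. Therefore $\varphi(I_{p,q}^n)\subseteq x^{pqn}R^1$, and since $R^1x^{pqn}$ is an ideal of $R^1$ and $R\subseteq R^1$, multiplying by $R$ keeps us inside $R^1x^{pqn}$, which gives the inclusion.

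For the reverse inclusion it suffices, by the first paragraph, to check that $x^{pqn}m_j\in R\,I_{p,q}^n$ for each basis monomial. Put $\alpha_j:=(pqn+\rho_j-qj)/p$; this is an integer because $\rho_j\equiv qj\pmod p$, and it is non-negative once $n$ is large enough (for instance $n\ge b$, since $0\le j<b$ and $\rho_j\ge0$). Then $\varphi(x^{\alpha_j}y^j)=x^{p\alpha_j+qj}y^j=x^{pqn+\rho_j}y^j=x^{pqn}m_j$, while $p\alpha_j+qj=pqn+\rho_j\ge pqn$ gives $x^{\alpha_j}y^j\in I_{p,q}^n$ by \eqref{property_Ipq}. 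Hence $x^{pqn}m_j\in\varphi(I_{p,q}^n)\subseteq R\,I_{p,q}^n$, and since $R\,I_{p,q}^n$ is an $R$-module and $\CC\{x^p\}\subseteq R$, we conclude $R^1x^{pqn}=\sum_j\CC\{x^p\}\,(x^{pqn}m_j)\subseteq R\,I_{p,q}^n$. Together with the previous paragraph this yields the desired equality.

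The step I expect to require the most care is the bookkeeping of the identification $R\subseteq R^1$ through $\varphi$ together with the two quotients by $f$ and $\widehat f$: one must make sure that the relation $x^{pqn}m_j=\varphi(x^{\alpha_j}y^j)$ is an equality of classes in $R^1$ and not merely of representatives in $\CC\{x,y\}^{\qa_p}$, which holds because $\varphi$ is a ring homomorphism compatible with both quotients. The hypothesis $n\gg0$ enters only to guarantee $\alpha_j\ge0$, so that $x^{\alpha_j}y^j$ is a genuine element of $\CC\{x,y\}$ belonging to $I_{p,q}^n$.
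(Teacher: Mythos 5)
Your proof is correct, and its core is the same computation as the paper's: both directions reduce to the fact that $\varphi$ sends a monomial $x^iy^j$ to $x^{pi+qj}y^j$, combined with the observation that the $\qa_p$-invariance congruence $i\equiv qj \pmod{p}$ allows one to divide the shifted exponent by $p$; your $\alpha_j$ is exactly the exponent $\frac{pqn+i-qj}{p}$ appearing in the paper's proof of the reverse inclusion. The difference is organizational. The paper constructs a $\varphi$-preimage in $I_{p,q}^n$ of an \emph{arbitrary} element $g=\sum_{i,j}a_{ij}x^iy^j\cdot x^{pqn}$ of $(\CC\{x\}[y])^{\qa_p}x^{pqn}$, whereas you first decompose $R^1=\bigoplus_{j=0}^{b-1}\CC\{x^p\}\,m_j$ as a free $\CC\{x^p\}$-module via Weierstrass division in the invariant ring, and then only need preimages of the $b$ monomials $x^{pqn}m_j$, letting the $R$-module structure do the rest. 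This buys something concrete: a threshold for $n$ that is uniform (you note $n\ge b$ suffices), whereas in the paper's argument as literally written the condition that $h$ lie in $\CC\{x\}[y]$ ``for all $n\gg 0$'' involves a bound depending on the $y$-degree of the representative $g$, and one must implicitly bound that degree by Weierstrass division --- precisely the step you make explicit --- to get a threshold independent of $g$. So the mechanism is the same, but your finite-basis reduction is a genuine tightening of the paper's proof.
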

\begin{proof}
By~\eqref{property_Ipq}, the ideal $I_{p,q}^n$ is generated by all monomials $x^i y^j$ with $pi+qj \geq pqn$. 
Each monomial $x^i y^j$ is converted under $\varphi$ in $x^{pi+qj} y^{j} = x^{pi+qj-pqn} y^j \cdot x^{pqn}$ which 
belongs to $R^1 x^{pqn}$.

For the other inclusion, given $g = \sum_{i,j} a_{ij} x^i y^j \cdot x^{pqn} \in (\CC\{x\}[y])^{\qa_p} x^{pqn}$, 
consider as in the proof of \autoref{lema1},
$$
  h (x,y) = \sum_{i,j} a_{ij} x^{\frac{pqn+i-qj}{p}} y^j \ \in \ \CC\{x^{\pm 1}\}[y]
$$
which is an element of $\CC\{x\}[y]$ for all $n \gg 0$. The $(p,q)$-order of each monomial of $h$ is greater 
than or equal to $pqn$ hence they are in $I_{p,q}^n$ by~\eqref{property_Ipq}. Finally, it is clear that 
$\varphi(h+\langle f \rangle) = g + \langle \widehat{f} \, \rangle$ which concludes the proof.
\end{proof}

\begin{prop}\label{prop_one_step}
Using the previous conventions and assumptions, the order $\nu:=\nu_{p,q}(f)$ is a multiple of $pq$ and
$$
\dim_\CC \left(\frac{R^1}{R}\right) = \frac{\nu (\nu-p-q+1)}{2pq} \, \in \, \NN.
$$
\end{prop}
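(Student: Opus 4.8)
The plan is to compute $\dim_\CC(R^1/R)$ by trapping $R$ between $R^1$ and a high power of $x$, and then reading off the two colengths from the preceding lemmas. First I would verify the divisibility claim $pq\mid\nu$. Since $f$ is assumed to be a Weierstrass polynomial in $y$ of degree $b$ with $\nu=qb$, one has $q\mid\nu$ for free. For the factor $p$, the total transform satisfies $f(x^p,x^qy)=x^\nu\widehat f(x,y)$; as both $f(x^p,x^qy)$ and $\widehat f$ are $\qa_p$-invariant, the factor $x^\nu$ must be invariant as well, forcing $p\mid\nu$ (this is precisely \autoref{lema1}). Because $\gcd(p,q)=1$ this yields $pq\mid\nu$, which also legitimizes the later use of \autoref{lemma_dim_I+f}.

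Next, fixing $n\gg0$, \autoref{lemaideal} supplies the chain
\begin{equation*}
R^1 x^{pqn}=R\,I_{p,q}^n\subseteq R\subseteq R^1,
\end{equation*}
where the middle inclusion holds since $I_{p,q}^n\subseteq\CC\{x,y\}$. As $R^1/R^1x^{pqn}$ is finite-dimensional, so is $R^1/R$, and additivity of dimension along the chain gives
\begin{equation*}
\dim_\CC\Bigl(\frac{R^1}{R}\Bigr)=\dim_\CC\Bigl(\frac{R^1}{R^1 x^{pqn}}\Bigr)-\dim_\CC\Bigl(\frac{R}{R\,I_{p,q}^n}\Bigr).
\end{equation*}
The second term is immediate: $R/R\,I_{p,q}^n=\CC\{x,y\}/(I_{p,q}^n+\langle f\rangle)$, so \autoref{lemma_dim_I+f} evaluates it as $n\nu-\frac{\nu(\nu-p-q+1)}{2pq}$ for every $n\ge\frac{\nu}{pq}$.

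The remaining term is where the quotient structure is used. Writing $S=\CC\{x\}[y]/\langle\widehat f\rangle$, a free $\CC\{x\}$-module of rank $b=\deg_y\widehat f$, one has $R^1=S^{\qa_p}$; since taking $\qa_p$-invariants is exact in characteristic zero and $x^{pqn}$ is invariant, $R^1/x^{pqn}R^1\cong(S/x^{pqn}S)^{\qa_p}$. The space $S/x^{pqn}S$ has monomial basis $\{x^iy^j\mid 0\le i<pqn,\ 0\le j<b\}$, on which $\zeta\in\qa_p$ acts by $x^iy^j\mapsto\zeta^{-i+qj}x^iy^j$; a monomial is invariant exactly when $i\equiv qj\pmod p$, and for each of the $b$ choices of $j$ there are precisely $pqn/p=qn$ admissible $i$ (here $p\mid pqn$ is essential). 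Hence $\dim_\CC(R^1/x^{pqn}R^1)=bqn=\nu n$. Substituting, the terms $n\nu$ cancel and
\begin{equation*}
\dim_\CC\Bigl(\frac{R^1}{R}\Bigr)=\frac{\nu(\nu-p-q+1)}{2pq},
\end{equation*}
a nonnegative integer since it is the dimension of a genuine finite-dimensional $\CC$-vector space.

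I expect the main obstacle to be the careful handling of invariants: one must check that $R^1=S^{\qa_p}$ really identifies with $(\CC\{x\}[y]/\langle\widehat f\rangle)^{\qa_p}$ and that invariants commute with quotienting by $x^{pqn}$, both of which rest on the exactness of the Reynolds operator. The choice of the exponent $pqn$ (rather than an arbitrary power of $x$) is what makes the invariant-monomial count divide out cleanly by $p$, and identifying $R^1x^{pqn}$ as an ideal contained in $R$ is exactly what \autoref{lemaideal} is for.
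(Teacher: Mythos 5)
Your proof is correct and follows the paper's skeleton almost exactly: the same invariance argument giving $p \mid \nu$ (note your parenthetical credit to \autoref{lema1} is slightly off, since that lemma goes in the opposite direction, but the argument itself is the one the paper uses), the same chain $R\,I_{p,q}^n = R^1x^{pqn} \subseteq R \subseteq R^1$ furnished by \autoref{lemaideal}, and the same evaluation of $\dim_\CC\bigl(R/R\,I_{p,q}^n\bigr)$ via \autoref{lemma_dim_I+f}. The one place you genuinely diverge is the middle term: the paper evaluates $\dim_\CC\bigl(R^1/R^1x^{pqn}\bigr)$ as a local intersection number, $pqn\,E\cdot\widehat{C} = n\nu$, by citing \autoref{formula_self-intersection}\eqref{formula_self-intersection2}, whereas you compute it by hand, identifying $R^1/x^{pqn}R^1$ with $(S/x^{pqn}S)^{\qa_p}$ for $S = \CC\{x\}[y]/\langle \widehat{f}\,\rangle$ and counting invariant monomials in the free $\CC\{x\}$-basis. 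Your route is more elementary and self-contained: it avoids the intersection theory of Section 2 and makes transparent why the exponent must be a multiple of $p$ for the count to divide out evenly. The price is the Reynolds-operator bookkeeping you flag yourself --- one needs $\widehat{f}$ and $x^{pqn}$ invariant and $x^{pqn}$ a nonzerodivisor on $S$ (true, since $x \nmid \widehat{f}$) so that taking $\qa_p$-invariants commutes with both quotients --- all of which the paper's one-line citation sidesteps.
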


\begin{proof}
Since $E$ and $\widehat{C}$ only intersect at the origin of the first chart, the $(p,q)$-initial part 
of $f$ is of the form $f_{\nu} = \lambda y^b$, $\lambda \in \CC^{*}$; thus $q \,|\, \nu$. On the other 
hand, $f(x^p,x^q y) = x^{\nu} \widehat{f}(x,y)$ and $\widehat{f}(x,y)$ define functions on $X(p;-1,q)$ 
and hence $x^{\nu}$ is $\qa_p$-invariant. Consequently, $p \,|\, \nu$ and also $pq \,|\, \nu$ because 
$p$ and $q$ are coprime.

For the second part of the statement, by \autoref{lemaideal}, there is a short exact sequence, 
for all $n \gg 0$,
$$
0\longrightarrow \frac{R}{R I_{p,q}^n} \longrightarrow \frac{R^1}{R^1 x^{pqn}} 
\longrightarrow \frac{R^1}{R} \longrightarrow 0.
$$

The dimension of the first vector space is calculated in \autoref{lemma_dim_I+f}. 
The second one is a consequence of $E \cdot \widehat{C} = \frac{\nu}{pq}$, see 
\autoref{formula_self-intersection}\eqref{formula_self-intersection2},
\begin{equation*}
\dim_\CC \left( \frac{R^1}{R^1 x^{pqn}} \right) = 
\dim_\CC \left( \frac{\CC\{x,y\}^{\qa_p}}{\langle \widehat{f},x^{pqn}\rangle} \right) = pqn
\, E\cdot \widehat{C} = n \nu. \qedhere
\end{equation*}
\end{proof}

Now we are ready to state the main result of this section which allows one to interpret the 
invariant $\delta^w$ as the dimension of a vector space given by the normalization of the singularity. 

\begin{thm}
Let $f:(X(d;a,b),0)\to (\CC,0)$ be a reduced analytic function germ. Assume $(d;a,b)$ is a normalized type. 
Consider $R=\frac{\CC\{x,y\}^{\qa_d}}{\langle f\rangle }$ the local ring associated with $f$ and $\overline{R}$ 
its normalization ring. Then,
$$
\delta_0^\w(f) = \dim_\CC \left( \, \frac{\overline{R}}{R} \, \right) \, \in \, \NN.
$$
\end{thm}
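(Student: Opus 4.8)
The plan is to reduce the statement to the classical equality $\delta=\dim_\CC(\overline{R_g}/R_g)$ on $(\CC^2,0)$ by realizing the germ $(\{f=0\},0)\subset X(d;a,b)$ as the strict transform of a plane curve germ under a single weighted blow-up, and then to account for the difference between the two colengths using \autoref{prop_one_step} together with the additivity provided by \autoref{thm-delta}.

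First I would normalize the ambient type. Since $(d;a,b)$ is normalized, choosing $c$ with $ca\equiv-1 \pmod d$ and replacing the generator of $\qa_d$ by its $c$-th power gives an isomorphism $X(d;a,b)\cong X(p;-1,q)$ with $p=d$ and $q\equiv-a^{-1}b \pmod d$, where $(p,q)=1$ (compare \autoref{X2}). This isomorphism preserves both $\delta^\w$ and $\dim_\CC(\overline{R}/R)$, so I may assume that $f\in\CC\{x,y\}^{\qa_p}$ defines a function germ on $X(p;-1,q)$ with $x\nmid f$; a branch of $f$ equal to the exceptional direction can be split off and handled through the additivity corollary, so this last assumption is harmless.

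Second, I would apply \autoref{lema1} to produce $g\in\CC\{x\}[y]$, reduced because $f$ is, with $x\nmid g$ and $g(x^p,x^qy)=x^{\nu}f(x,y)$, where $\nu=\nu_{p,q}(g)$. Geometrically $\{f=0\}\subset X(p;-1,q)$ is exactly the strict transform of $\{g=0\}\subset\CC^2$ under the $(p,q)$-weighted blow-up of the origin, sitting at the origin $P_1$ of the first chart. Writing $R_g=\CC\{x,y\}/\langle g\rangle$ and $R=\CC\{x,y\}^{\qa_p}/\langle f\rangle$, the pull-back embeds $R_g\subset R$, and, the blow-up being a finite birational morphism, the two rings share the same total ring of fractions and hence the same normalization $\overline{R_g}=\overline{R}$. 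In particular the tower $R_g\subset R\subset\overline{R}$ yields the additivity of finite colengths $\dim_\CC(\overline{R}/R)=\dim_\CC(\overline{R_g}/R_g)-\dim_\CC(R/R_g)$. By the classical interpretation of the $\delta$-invariant on $(\CC^2,0)$ one has $\dim_\CC(\overline{R_g}/R_g)=\delta_g$, while \autoref{prop_one_step} gives $\dim_\CC(R/R_g)=\frac{\nu(\nu-p-q+1)}{2pq}\in\NN$ and $pq\mid\nu$.

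It then remains to identify $\delta_g-\frac{\nu(\nu-p-q+1)}{2pq}$ with $\delta^\w(f)$, and for this I would read both invariants off the same resolution. The $(p,q)$-blow-up is the first step of an embedded resolution of $\{g=0\}$, and its strict transform is precisely $\{f=0\}$ at $P_1$, so the infinitely near points of $g$ beyond the first coincide with those of a strong $\Q$-resolution of $(\{f=0\},P_1)$, with the same multiplicities and local types. Applying \eqref{eq-delta-form} to $g$ (the instance $d=1$, $e=1$ of \autoref{thm-delta}) splits $\delta_g$ as the first-step contribution $\frac{\nu(\nu-p-q+1)}{2pq}$ plus the sum over $Q\prec P_1$, and that remaining sum equals $\delta^\w(f)$ by \autoref{thm-delta}. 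Hence $\delta_0^\w(f)=\dim_\CC(\overline{R}/R)=\delta_g-\frac{\nu(\nu-p-q+1)}{2pq}\in\NN$. The step I expect to be the main obstacle is exactly this matching: checking that under the identification of $\{f=0\}$ with the strict transform of $\{g=0\}$ the blow-ups resolving $f$ on $X(p;-1,q)$ are termwise those resolving $g$ after the first one, and that the hypotheses of \autoref{prop_one_step} genuinely hold here, namely that $E$ and the strict transform meet only at the origin of the first chart and that $f$ is honestly $\qa_p$-invariant there.
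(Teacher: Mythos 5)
Your proposal follows essentially the same route as the paper's proof: reduce to the case $x\nmid f$, lift to a plane germ $g$ via \autoref{lema1}, use the tower $R^{-1}\subset R\subset\overline{R}$ (same normalization, since the blow-up is birational) together with the classical identity $\delta_0(g)=\dim_\CC(\overline{R}/R^{-1})$ and \autoref{prop_one_step} for $\dim_\CC(R/R^{-1})$, and finally split $\delta_0(g)=\delta_{\pi_{(p,q)}}(f)+\delta_0^\w(f)$ by reading \autoref{thm-delta} off the resolution of $g$ whose first step is the $(p,q)$-blow-up. The two points you flag as obstacles are exactly what the paper settles in its preliminary reductions: the hypotheses of \autoref{prop_one_step} are secured by first writing $f$ in the Weierstrass form \eqref{special_form} via the equivariant Weierstrass division (\autoref{weierstrass_remark}), and the assumption $x\nmid f$ is arranged by an equivariant coordinate change $[(x,y)]\mapsto[(x+\lambda y^k,y)]$ with $bk\equiv a \bmod d$, rather than by splitting off the branch $\{x=0\}$ (which would be delicate, since that branch does not define a function germ on $X(d;a,b)$ and the colength analogue of the additivity corollary is not established in the paper).
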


\begin{proof}
After a suitable change of coordinates of the form $X(d;a,b)\to X(d;a,b)$, 
$[(x,y)]\mapsto [(x+\lambda y^k,y)]$ where $ bk\equiv a \mod d$, one can assume $x\nmid f$. 
Moreover, by \autoref{weierstrass_remark}, $f$ can be written in $\CC \{x,y\}^{\qa_d}$ in the form
\begin{equation}\label{special_form}
f(x,y) = y^r + \sum_{i\neq 0, \ j<r} a_{ij} x^i y^j \ \in \ \CC\{x\}[y].
\end{equation}

Consider $g\in \CC\{x,y\}$ the reduced germ obtained after applying \autoref{lema1} to~$f$. 
Denote by $R^{-1}=\frac{\CC\{x,y\}}{\langle g\rangle}$ its corresponding local ring and by 
$\pi_{(p,q)}$ the blowing-up at the origin with $p,q \in \NN$ coprime satisfying $X(p;-1,q)=X(d;a,b)$. 
Also, denote by $\delta_{\pi_{(p,q)}}$ the contributing rational number associated with the blowing-up
and to $f$,
$$
\delta_{\pi_{(p,q)}}(f) := \frac{\nu (\nu-p-q+1)}{2pq},
$$
where $\nu$ is $\nu_{p,q}(f)$.

On one hand $\delta_0(g)=\delta_{\pi_{(p,q)}}(f)+\delta_0^\w(f)$ by definition. On the other hand, 
since $\overline{R}$ is also the normalization of $g$ because $R^{-1}\subset R\subset \overline{R}$, 
by the classical case
$$
\delta_0(g) = \dim_\CC \left( \frac{\overline{R}}{R^{-1}} \right) = 
\dim_\CC \left( \, \frac{\overline{R}}{R} \, \right) + \dim_\CC \left( \frac{R}{R^{-1}} \right).
$$
The special form of $f(x,y)$ in~\eqref{special_form} implies the hypotheses of 
\autoref{prop_one_step} are satisfied for $g$. 
Hence $\delta_{\pi_{(p,q)}}(f) = \dim_\CC\frac{R}{R^{-1}}$ and the proof is complete.
\end{proof}

\section{A genus formula for weighted projective curves}
\label{sec-genus}

Denote by $\PP^2_\w$ the weighted projective space of weight $\w=(w_0,w_1,w_2)$ written in a normalized form,
that is, $\gcd(w_i,w_j)=1$, for any pair of weights $i\neq j$ (cf.~\cite{dolgachev-weighted}).  
Denote by $P_0:=[1:0:0]_\w$, $P_1:=[0:1:0]_\w$ and $P_2:=[0:0:1]_\w$ the three vertices of $\PP^2_\w$. 
Denote $\bw:=w_0w_1w_2$, $w_{ij}:=w_iw_j$ and $|\w|:=\sum_i w_i$. 

Note that the condition $\gcd(w_i,w_j)=1$ can be assumed without loss of generality, since a polynomial 
$F(X_0,X_1,X_2)$ defining a zero set on $\PP^2 \big(\w_0,\w_1,\w_2\big)$ such that $X_i\nmid F$ is transformed 
into another polynomial $F(X_0^{\frac{1}{d_0}},X_1^{\frac{1}{d_1}},X_2^{\frac{1}{d_2}})$ on $\PP^2(p_0,p_1,p_2)$
by the isomorphism defined in \autoref{propPw}.

Consider $\cC\subset \PP_\w^2$ a curve of degree $d$ given by a reduced equation $F=0$. 
We define $\Sing(\cC)=\cC\cap \left(\{\partial_x F=\partial_y F=\partial_z F=0\}\cup \{P_0,P_1,P_2\}\right)$. 
Thus, we say $\cC$ is \emph{smooth} if $\Sing(\cC)=\emptyset$. 
Also, we say $\cC$ is \emph{transversal w.r.t.~the axes} if $\{P_0,P_1,P_2\}\cap \cC=\emptyset$ and for any 
$P\in \cC\cap \{X_i=0\}$ the local equations of $\cC$ and $\{X_i=0\}$ are given by $uv=0$.

Formulas for the genus of quasi-smooth curves can be found in~\cite{dolgachev-weighted}.

Suppose $\cC\subset \PP_\w^2$ is a smooth curve of degree $d$ transversal w.r.t.~the axes. 
Consider the covering 
$$\begin{matrix}
   \phi:&\PP^2& \longrightarrow &\PP_w^2\\
        &[X_0:X_1:X_2]& \mapsto&[X_0^{w_0}:X_1^{w_1}:X_2^{w_2}]_\w\\  
\end{matrix} $$

Note that $\phi^{*}(\cC)$ is a smooth projective curve of degree $d$. 
Therefore $\chi(\phi^{*}(\cC))=2-(d-1)(d-2)$ and $\chi(\cC)=2-2g(\cC)$.
Using the Riemann-Hurwitz formula one obtains 
$$2-(d-1)(d-2)=\bw\left(\chi(\cC)-d\sum_i\frac{1}{w_{jk}}\right)+3d=2\bar \w-2\bw g(\cC)-d\sum_iw_{i}+3d$$
and hence 

$$2\bw g(\cC)=d^2 - d |\w| +2\bar \w.$$

This justifies the following.

\begin{dfn}
For a given $d\in \NN$ and a normalized weight list $\w\in \NN^3$ the \emph{virtual genus} associated with 
$d$ and $\w$ is defined as
$$
g_{d,\w}:=\frac{d(d - |\w|)}{2\bw}+1.
$$
\end{dfn}

\begin{rem}
\label{rem-virtual}
Note that $g_{d,\w}$ is always defined regardless of whether or not there actually exist even smooth curves of degree 
$d$ in $\PP^2_{\w}$. For instance, it is easy to see that there are no smooth curves of degree $5$ in $\PP^2_{(2,3,5)}$
(see \autoref{rem-smooth}). 

In general, by the discussion above, if $g_{d,\w}$ is not a positive integer, then no smooth curves in $\PP^2_{\w}$ of 
degree $d$ transversal w.r.t.~the axes can exist. However, this is not a sufficient condition, since $g_{40,(2,3,5)}=20$, 
but all curves of degree $40$ need to pass through at least one vertex.
\end{rem}

The characterization is given by the following.

\begin{lemma}
\label{lemma-generic}
Given $d$ and $\w$ as above, then the space of smooth curves of degree $d$ transversal w.r.t.~the axes in $\PP^2_{\w}$
is non-empty if and only if $\bw\, |\, d$. Moreover, any smooth curve can be deformed into a smooth curve of the same degree 
and transversal w.r.t.~the axes.
\end{lemma}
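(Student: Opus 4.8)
The plan is to parametrize the degree-$d$ curves in $\PP^2_\w$ by the projective space $\PP(V_d)$, where $V_d$ is the finite-dimensional space of $\w$-homogeneous polynomials of degree $d$, and to realize the smooth transversal curves as a Zariski-open subset of $\PP(V_d)$; the whole statement then splits into an explicit existence check when $\bw\,|\,d$ and a soft connectedness argument. First I would record that both conditions are open: smoothness amounts to avoiding the vertices and avoiding the common zeros of $\partial_xF,\partial_yF,\partial_zF$, while transversality to the axis $\{X_i=0\}$ amounts to the nonvanishing of the discriminant of the restriction $F|_{\{X_i=0\}}$ on the line $\{X_i=0\}\cong\PP^1$ (together with vertex avoidance). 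Thus the locus $U_{\mathrm{sm}}$ of smooth curves and its subset $U_{\mathrm{tr}}$ of smooth curves transversal w.r.t.\ the axes are Zariski-open in $\PP(V_d)$.

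For the \emph{only if} direction together with the divisibility constraint, suppose $\cC=\{F=0\}$ is smooth (or merely transversal w.r.t.\ the axes). By the definition of $\Sing(\cC)$ it then avoids the three vertices, so the defining polynomial satisfies $F(P_i)\neq0$. But $F(P_i)$ is exactly the coefficient of $X_i^{d/w_i}$, the only degree-$d$ monomial that is nonzero at $P_i$, and this monomial exists precisely when $w_i\,|\,d$. Hence $w_i\,|\,d$ for each $i$, and since the weights are pairwise coprime this forces $\bw\,|\,d$. In particular the space is empty whenever $\bw\nmid d$.

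For the \emph{if} direction I would exhibit a single explicit member, the weighted Fermat curve
\[
\cC_0=\{\,X_0^{d/w_0}+X_1^{d/w_1}+X_2^{d/w_2}=0\,\},
\]
whose pullback under the covering $\phi$ of \eqref{covering} is the classical smooth Fermat curve $\{X_0^d+X_1^d+X_2^d=0\}\subset\PP^2$. I would verify everything directly in the charts $U_i\cong X(w_i;\,w_j,w_l)$ with $\{i,j,l\}=\{0,1,2\}$: the defining polynomial equals $1$ at each $P_i$, so the vertices are avoided; the partials $\partial_{X_i}F=\tfrac{d}{w_i}\,X_i^{\,d/w_i-1}$ have no common zero on $\PP^2_\w$ (if some exponent is $1$ the corresponding partial is a nonzero constant, and otherwise a common zero would force all $X_i=0$); and at each point of $\cC_0\cap\{X_i=0\}$ the curve is smooth with tangent direction distinct from that of the axis, which gives the required local model $uv=0$. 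The main technical point is exactly this last transversality check, hand in hand with the observation that, because $(w_i,w_j)=1$, every point of an axis other than a vertex has trivial stabilizer and is therefore a smooth point of the ambient $\PP^2_\w$; only this guarantees that local coordinates exist and that the classical transversality computation applies verbatim.

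Finally, for the \emph{moreover} clause I would argue by deformation inside $\PP(V_d)$. When $\bw\,|\,d$ the curve $\cC_0$ shows $U_{\mathrm{tr}}\neq\emptyset$, so $U_{\mathrm{sm}}\supseteq U_{\mathrm{tr}}$ is a nonempty open subset of the irreducible variety $\PP(V_d)$; being open in an irreducible space it is itself irreducible, and being an open subset of a smooth projective space it is a connected complex manifold, hence path-connected, with $U_{\mathrm{tr}}$ dense in it. Any smooth curve is by the second paragraph a point of $U_{\mathrm{sm}}$ (and in particular already satisfies $\bw\,|\,d$, so no hypothesis is lost), and it can therefore be joined to a member of $U_{\mathrm{tr}}$ by a path lying in $U_{\mathrm{sm}}$; this path of smooth curves of the fixed degree $d$ ending at a transversal one is precisely the asserted deformation.
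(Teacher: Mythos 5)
Your proof is correct, and on the two existence directions it is essentially the paper's own argument: the same vertex-avoidance observation (a curve missing $P_i$ must contain a monomial $\lambda_i X_i^{d/w_i}$ with $\lambda_i\neq 0$, hence $w_i\mid d$ for each $i$, and $\bw\mid d$ follows from pairwise coprimality), and the same Fermat-type witness $X_0^{d/w_0}+X_1^{d/w_1}+X_2^{d/w_2}$; you merely verify its smoothness and transversality in detail where the paper simply asserts it. The genuine difference is in the \emph{moreover} clause. The paper stays explicit and constructive: given a smooth $\cC$, it perturbs by the very same monomials, considering $\cC+\lambda_0X_0^{d/w_0}+\lambda_1X_1^{d/w_1}+\lambda_2X_2^{d/w_2}$, and notes that a generic choice of the $\lambda_i$ gives a curve transversal w.r.t.\ the axes (and still smooth for small generic $\lambda_i$, smoothness being open). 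You instead argue softly in the parameter space of degree-$d$ forms: both loci are Zariski-open, $U_{\mathrm{tr}}$ is nonempty once $\bw\mid d$, and a nonempty Zariski-open subset of an irreducible projective space is connected with every nonempty open subset dense, so any point of $U_{\mathrm{sm}}$ can be joined to $U_{\mathrm{tr}}$ by a path inside $U_{\mathrm{sm}}$. Both arguments are sound, and yours has the added merit of showing that transversal curves are dense among smooth ones. What the paper's choice buys is an \emph{algebraic} one-parameter deformation with an explicit equation, and this is not cosmetic: in the proof of \autoref{thm-main} the authors invoke the \emph{proof} (not just the statement) of this lemma to produce an algebraic family $\cC_t$ whose equations degenerate to $F^{\bw}$ and whose members meet $\cC$ transversally and away from the axes. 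Your continuous path would not feed that downstream argument directly; to recover it one would restrict to the pencil spanned by the given curve and a fixed transversal curve and avoid the finitely many bad parameter values, which is in effect the paper's perturbation.
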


\begin{proof}
Let $F$ be a weighted homogeneous polynomial of degree $d$ whose set of zeroes defines $\cC$. The condition $P_i\notin \cC$ 
implies that $F$ contains a monomial of type $\lambda_iX_i^{d_i}$, $\lambda_i\neq 0$, $i=0,1,2$. Therefore $w_id_i=d$, which 
implies the result since $\gcd(w_i,w_j)=1$ by hypothesis on the weights~$\w$.

For the converse, assume $\bw | d$, then $X_0^{\frac{d}{w_0}}+X_1^{\frac{d}{w_1}}+X_2^{\frac{d}{w_2}}$ is a 
smooth curves of degree $d$ transversal w.r.t.~the axes in $\PP^2_{\w}$.

The \emph{moreover} part is a consequence of the fact that if $\cC$ is smooth then $\bw | d$ and hence 
$\cC + \lambda_0X_0^{\frac{d}{w_0}}+\lambda_1 X_1^{\frac{d}{w_1}}+\lambda_2 X_2^{\frac{d}{w_2}}$
is transversal w.r.t.~the axes for and appropriate generic choice of~$\lambda_i$.
\end{proof}

One has the following.

\begin{cor}
\label{cor-genus-smooth}
If $\cC$ is a smooth weighted curve in $\PP^2_\w$ of degree $d$, then $g(\cC)=g_{d,\w}$.
\end{cor}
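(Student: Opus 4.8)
The plan is to reduce an arbitrary smooth curve to the transversal case, where the genus has already been computed just before the definition of $g_{d,\w}$, and then to transport the result back by deformation invariance of the genus.

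First I would invoke \autoref{lemma-generic}. Since $\cC$ is smooth, that lemma forces $\bw\mid d$ and, by its \emph{moreover} part, $\cC$ can be deformed inside the space of degree-$d$ curves into a smooth curve $\cC'$ that is transversal w.r.t.~the axes. Concretely, writing $\cC=\{F=0\}$ and $G:=\lambda_0 X_0^{d/w_0}+\lambda_1 X_1^{d/w_1}+\lambda_2 X_2^{d/w_2}$, I would consider the one-parameter family $\cC_t:=\{F+tG=0\}$ with $\lambda_i$ chosen generically and $\cC_1=\cC'$ transversal. Because $\cC$ is smooth it avoids the three vertices $P_0,P_1,P_2$, which for pairwise coprime weights are the only singular points of $\PP^2_\w$; hence every smooth member of the family is an honest compact Riemann surface equal to its own normalization, and $g(\cC_t)$ is its topological genus.

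Second, I would establish that $g(\cC)=g(\cC')$. The locus of parameters $t$ for which $\cC_t$ is singular is a proper Zariski-closed, hence finite, subset of the parameter line; choosing $\lambda_i$ generic and a path avoiding these finitely many bad values, the restriction of $\{F+tG=0\}$ over this path is a proper smooth family of curves. By Ehresmann's fibration theorem all its fibers are diffeomorphic, so the genus is locally constant along the family and $g(\cC)=g(\cC')$. Third, for the transversal smooth curve $\cC'$ the computation preceding the definition of $g_{d,\w}$ applies verbatim: pulling back along the branched covering $\phi\colon \PP^2\to\PP^2_\w$, the preimage $\phi^{*}(\cC')$ is a smooth plane curve of degree $d$, and the Riemann--Hurwitz formula yields $2\bw\,g(\cC')=d^2-d|\w|+2\bw$, that is $g(\cC')=g_{d,\w}$. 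Combining the three steps gives $g(\cC)=g(\cC')=g_{d,\w}$.

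The main obstacle is the second step: one must guarantee that the deformation connecting $\cC$ to its transversal model can be chosen so as to pass \emph{only} through smooth curves, so that deformation invariance of the genus applies. Transversality of the endpoint is already supplied by \autoref{lemma-generic}, but smoothness along the entire path requires a genericity argument of Bertini type in the singular ambient space $\PP^2_\w$, with attention to the base points of the pencil $\langle F,G\rangle$ and to the quotient singularities at the vertices. Once this is secured, the remaining computations are those already carried out in the text.
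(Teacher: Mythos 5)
Your proposal takes essentially the same route as the paper: its proof likewise reduces to the transversal case via the \emph{moreover} part of \autoref{lemma-generic} and then applies the Riemann--Hurwitz computation on the covering $\phi:\PP^2\to\PP^2_\w$ carried out just before the definition of $g_{d,\w}$. The only difference is that you make explicit the deformation-invariance step (finitely many singular parameter values, Ehresmann fibration along a path avoiding them) that the paper compresses into the phrase ``one can assume that $\cC$ is transversal,'' which is a legitimate filling-in of the same argument rather than a different approach.
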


\begin{proof}
By \autoref{lemma-generic} one can assume that $\cC$ is transversal w.r.t.~the axes. 
The result follows immediately from the discussion above.
\end{proof}

\begin{rem}
\label{rem-smooth}
Note that \autoref{cor-genus-smooth} does not apply to quasi-smooth weighted curves, that is, curves whose
equation is a weighted homogeneous polynomial whose only singularity in $\CC^3$ is $\{0\}$.
For instance, the curve $\cC:=\{X_0X_1=X_2\}\subset \PP^2_{(2,3,5)}$ of degree~$5$ can be parametrized by the map
$\PP^1 \to \PP^2_{(2,3,5)}$, given by $[t:s]\mapsto [t^2:s^3:t^2s^3]_\w$. Hence it is rational and $g(\cC)=g(\PP^1)=0$.
However, $g_{5,(2,3,5)}=\frac{7}{12}$. As a consequence of \autoref{cor-genus-smooth}, there are no
smooth curves of degree~5 in $\PP^2_{(2,3,5)}$ (see \autoref{rem-virtual}).
\end{rem}

The purpose of this section is to prove the following.

\begin{thm}
\label{thm-main}
Let $\cC\subset \PP^2_{\w}$ be a reduced curve of degree $d>0$, then 
$$
g(\cC) = g_{d,\w} - \sum_{P\in \Sing(\cC)} \delta^{\w}_P.
$$
\end{thm}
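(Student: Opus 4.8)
The plan is to pull everything back to the classical projective plane along the branched covering $\phi:\PP^2\to\PP^2_\w$ of \eqref{covering} and to reduce the statement to the classical genus formula $g=g_d-\sum\delta$, paying for the reduction with Riemann--Hurwitz and with the local comparison between $\delta$ and $\delta^\w$ recorded in \eqref{eq-deltaw}. Concretely, I set $\widetilde\cC:=\phi^{*}(\cC)$, a reduced curve of degree $d$ in $\PP^2$ (reduced because $\phi$ is generically \'etale and $\cC$ is reduced). I may assume $\cC$ is irreducible, the general reduced case following from the same Euler-characteristic bookkeeping with the pairwise intersection points absorbed into $\Sing(\cC)$; when $\widetilde\cC$ breaks up, its components are permuted transitively by $\qa_\w$, so the induced map $\Phi:\overline{\widetilde\cC}\to\overline\cC$ between normalizations is a branched covering of compact Riemann surfaces of degree $\bw$.

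First I would run Riemann--Hurwitz for $\Phi$,
$$
\chi(\overline{\widetilde\cC})=\bw\,\chi(\overline\cC)-\sum_{\widetilde q}(e_{\widetilde q}-1),
$$
and substitute the classical genus formula $\chi(\overline{\widetilde\cC})=2-(d-1)(d-2)+2\sum_{\widetilde P}\delta_{\widetilde P}$ on the left and $\chi(\overline\cC)=2-2g(\cC)$ on the right. Since $\phi$ is unramified away from the coordinate axes $\{X_0X_1X_2=0\}$, both the ramification points $\widetilde q$ and the ``extra'' singular points $\widetilde P$ of $\widetilde\cC$ (beyond the \'etale preimages of $\Sing(\cC)$, at which $\delta_{\widetilde P}=\delta^\w_P$) sit over $\cC\cap\{X_0X_1X_2=0\}$. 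Inserting the identity $2\bw\,g_{d,\w}=d^2-d|\w|+2\bw$ from the computation preceding the definition of the virtual genus, solving for $g(\cC)$ reduces the theorem to the single equality
$$
d\,|\w|-3d-\sum_{\widetilde q}(e_{\widetilde q}-1)=2\sum_{\widetilde P}\delta_{\widetilde P}-2\bw\sum_{P}\delta^\w_P .
$$

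The heart of the matter is then a purely local analysis at the points of $\cC$ on the axes, which I would carry out stratum by stratum. For $P$ off the axes $\PP^2_\w$ is smooth, $\phi$ is \'etale, and the $\bw$ preimages satisfy $\delta_{\widetilde P}=\delta^\w_P$, so such points contribute $0$ to both sides. At a vertex $P=P_i$ the covering is locally the quotient map $\CC^2\to X(w_i;w_j,w_k)=\CC^2/\qa_{w_i}$ (together with the $w_jw_k$ freely permuted sheets of the remaining factors), so \eqref{eq-deltaw} with $d=w_i$ converts $\delta_{\widetilde P}$ into $\delta^\w_{P_i}$, with no transverse ramification of $\Phi$ beyond that recorded by $\qa_{w_i}$. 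At a non-vertex axis point $P\in\{X_i=0\}$ the space $\PP^2_\w$ is smooth, but $\phi$ is a cyclic $w_i$-fold cover branched along the axis, so $\delta_{\widetilde P}$, $\delta^\w_P=\delta_P$ and the indices $e_{\widetilde q}$ are tied together by the local Riemann--Hurwitz/Milnor-fibre balance \eqref{eq-def-delta}; the transversal case ($\delta_P=0$, one order-$w_i$ ramification over each of the $w_jw_k$ sheets) is exactly the computation already performed for smooth curves in \S\ref{sec-genus}, and it reproduces the terms $d|\w|-3d$.

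I expect the main obstacle to be precisely this axis bookkeeping, where the ramification of $\Phi$ and the discrepancy between $\delta$ and $\delta^\w$ are intertwined and must be disentangled at once: tracking how a branch of $\cC$ tangent to, or contained in, an axis distributes among the $w_i$ sheets upstairs, and verifying that the orbifold Milnor-fibre equality \eqref{eq-def-delta} defining $\delta^\w$ matches the vanishing-cycle count of the pulled-back singularity, is where the real work lies. A cleaner, coordinate-free alternative I would keep in reserve is to argue entirely downstairs by additivity of the orbifold Euler characteristic: write $\chi(\overline\cC)$ as the orbifold Euler characteristic of the smooth locus (which assembles to $2-2g_{d,\w}$ exactly as in the smooth-curve computation) plus a local correction $2\delta^\w_P$ at each $P\in\Sing(\cC)$, the latter being the defining property \eqref{eq-def-delta} of $\delta^\w$ through the orbifold Milnor fibre $F_t^\w$; this version trades the global ramification count for the orbifold-versus-ordinary Euler-characteristic reconciliation encoded in \eqref{eq-deltaw}.
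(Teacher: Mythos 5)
Your reduction of the theorem to the single identity
$$
d\,|\w|-3d-\sum_{\widetilde q}\bigl(e_{\widetilde q}-1\bigr)\;=\;2\sum_{\widetilde P}\delta_{\widetilde P}-2\bw\sum_{P\in\Sing(\cC)}\delta^\w_P
$$
is arithmetically correct (and the remark that $\chi(\overline{\widetilde\cC})=2-(d-1)(d-2)+2\sum_{\widetilde P}\delta_{\widetilde P}$ holds regardless of the number of components of $\widetilde\cC$ is a nice point), but this identity is where the whole content of \autoref{thm-main} now sits, and you do not prove it --- you explicitly defer it as ``where the real work lies''. Worse, the one non-transversal case you do sketch, the vertices, is sketched incorrectly. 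At $P_i$ the covering \eqref{covering} is locally the quotient of $\CC^2$ by the \emph{full} group $\qa_\w$, of degree $\bw$: it factors as $(x,y)\mapsto(x^{w_j},y^{w_k})$, which is branched along both axes (the subgroup $\qa_{w_j}\times\qa_{w_k}$ has fixed curves, so there are no ``$w_jw_k$ freely permuted sheets''), followed by the standard degree-$w_i$ quotient onto $X(w_i;w_j,w_k)$. Hence $\delta_{\widetilde P}$ is the classical delta of $f(x^{w_j},y^{w_k})$, not of the invariant lift $f(x,y)$ to which \eqref{eq-deltaw} refers, and \eqref{eq-deltaw} with $d=w_i$ does \emph{not} convert $\delta_{\widetilde P}$ into $\delta^\w_{P_i}$. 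The paper's own example $\cC=\{X_0X_1-X_2\}\subset\PP^2_{(a,b,a+b)}$ makes this concrete: at $P_0$ the germ is $\{x_1-x_2=0\}\subset X(a;b,a+b)$ with $\delta^\w_{P_0}=\frac{a-1}{2a}$, while the pullback germ at $\widetilde P_0$ is $x_1^{b}-x_2^{a+b}$ with $\delta_{\widetilde P_0}=\frac{(b-1)(a+b-1)}{2}$; these only balance through ramification of $\Phi$ along the axes, i.e.\ through the very identity you are trying to establish. Two further configurations are not addressed at all: a branch tangent to an axis at a non-vertex point (singular preimage and ramification intertwined), and $\cC$ containing a coordinate axis, in which case $\phi^{*}(\cC)$ is non-reduced and your setup of a reduced degree-$d$ curve upstairs breaks down.

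Note that the paper avoids this axis bookkeeping entirely by never pulling the singular curve upstairs: the covering \eqref{covering} is used only for \emph{smooth curves transversal to the axes} (where the Riemann--Hurwitz count is trivial, yielding \autoref{cor-genus-smooth}), and the general case is then handled downstairs by deforming $F^{\bw}$ into a family of smooth curves $\cC_t$ of degree $d\bw$ (\autoref{lemma-generic}) meeting $\cC$ transversally off the axes. Near each $P\in\Sing(\cC)$ the curve $\cC_t$ consists of $\bw$ disjoint copies of the Milnor fiber of $(\cC,P)$, so $2\delta^\w_P$ enters directly through the defining relation \eqref{eq-def-delta}, while away from $\Sing(\cC)$ and the $d^2$ intersection points $\cC_t\to\cC$ is an unramified $\bw:1$ cover; an Euler-characteristic count then gives the formula. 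Your route could presumably be completed, but the missing local statement --- a Riemann--Hurwitz-plus-delta comparison for the non-free degree-$\bw$ quotients along the axes --- is genuinely the hard part of that route, and \eqref{eq-deltaw}, proved only for the standard degree-$d$ quotient of a normalized type (free in codimension one), cannot simply be quoted for it.
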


\begin{proof}
Let $F\in \CC[X_0,X_1,X_2]$ be a defining equation for $\cC$. Note that $F^{\bw}$ defines a function. 
Also, from the proof of \autoref{lemma-generic} one can obtain an algebraic 
family of smooth curves $\cC_t=\{F_t=0\}$, $t\in (0,1]$ of degree $d\bw$ whose defining polynomials 
$F_t$ degenerate to $F^{\bw}=F_0$ such that $\cC_t$ and $\cC$ intersect transversally and outside the 
axes. Therefore, by \autoref{cor-genus-smooth}, 
$$g(\cC_t)=g_{w,d\bw}=\frac{d\bw}{2\bw}(d\bw - |\w|)+1=\frac{d}{2}(d\bw - |\w|)+1.$$
On the other hand, define $I_t:=\cC \cap \cC_t$. Using Bézout's \autoref{thm-bezout-P2w}, 
$$
(\cC\cdot \cC_t)=\sum_{P_t\in I_t} (\cC\cdot \cC_t)_{P_t} = \frac{1}{\bw}d^2 \bw=d^2.
$$
Since $\cC_t$ and $\cC$ intersect transversally outside the axes at smooth points 
one has $(\cC\cdot \cC_t)_{P_t}=1$ (see \autoref{exam-nu-nc}) and hence $\# I_t=d^2$.

For each $P\in \Sing(\cC)$, consider $B^S_P$ a regular neighborhood of $\cC$ at $P$ and for each
$P_t\in I_t$ consider $B^I_{P_t}$ a regular neighborhood of $\cC\cup \cC_t$ at $P_t$.

Note that, outside $B:=\bigcup_{P\in \Sing(\cC)} B^S_P \cup \bigcup_{P_t\in I_t} B^I_{P_t}$, 
$\cC_t$ provides a $\bw:1$ covering of $\cC$, that is,
$$
\chi(\cC_t \setminus B\cap \cC_t) = \bw \cdot \chi(\cC\setminus B\cap \cC).
$$
Also, in each $B^S_P$, the curve $\cC_t$ is the disjoint union of $\bw$ Milnor fibers of $(\cC,P)$. 
Therefore
\begin{equation}
\label{eq-chi0}
\array{c}
\chi(\cC_t) = \chi(\cC_t \setminus B\cap \cC_t) + \sum_{P\in \Sing(\cC)} \chi(B^S_P\cap \cC_t) + 
\sum_{P_t\in I_t} \chi(B^I_{P_t}\cap \cC) = \\
\bw \cdot \chi(\cC \setminus B\cap \cC) + 
\bw \cdot \left[ \sum_{P\in \Sing(\cC)} 2\delta^\w_P + \sum_{P\in \Sing(\cC)} r_P \right] + d^2.
\endarray
\end{equation}
On the other hand
\begin{equation}
\label{eq-chi1}
\chi(\cC_t) = 2- 2\left(\frac{d}{2}(d\bw - |\w|)+1\right)=d|\w| - d^2\bw
\end{equation}
and
$$
\chi(\cC) = 
\begin{cases}
2 - 2g(\cC) - \sum_{P\in \Sing(\cC)} (r_P-1) \\
\chi(\cC \setminus B\cap \cC) + d^2 + \# \Sing(\cC).
\end{cases}
$$
Therefore
\begin{equation}
\label{eq-chi2}
\chi(\cC \setminus B\cap \cC) = 2 - 2g(\cC) - d^2 - \sum_{P\in \Sing(\cC)} r_P.
\end{equation}
Substituting \eqref{eq-chi1} and \eqref{eq-chi2} in \eqref{eq-chi0} one obtains
$$
d|\w| - d^2\bw = \bw \cdot \left[ 2 - 2g(\cC) - d^2 - \sum_{P\in \Sing(\cC)} r_P + 
\sum_{P\in \Sing(\cC)} 2\delta^\w_P + \sum_{P\in \Sing(\cC)} r_P \right] + d^2,
$$
which after simplification becomes 
$$
2 \bw g(\cC)=d^2 - d|\w| + 2 \bw - 2 \bw \cdot \left( \sum_{P\in \Sing(\cC)} \delta^\w_P\right)
$$
and results into the desired formula.
\end{proof}

\begin{exam}
Let us consider the curve $\cC=\{X_0X_1-X_2\}\subset \PP^2_{\w}$, with $\w=(a,b,a+b)$. 
Note that $\PP^1\to \PP^2_{\w}$ given by $[t:s]\mapsto [t^a:s^b:t^as^b]$ is an
isomorphism and hence $g(\cC)=g(\PP^1)=0$ (see discussion in \autoref{rem-smooth}).
In order to use \autoref{thm-main} one needs to compute the virtual genus of $\cC$
$$
g_{d,\w}=\frac{2ab-a-b}{2ab}.
$$
On the other hand $\Sing(\cC)=\{P_0,P_1\}$. Note that both singularities $P_0$ and $P_1$
are of type $x^p-y^q$ with $(p,q)=(1,1)$ in their respective quotient-singularity charts
($P_0\in X(a;b,a+b)$ and $P_1\in X(b;a,a+b)$) and thus, 
formula~\eqref{eq-pq} implies:
$$\delta^\w_{P_0}=\frac{1}{2} \frac{(1-1-1+a)}{a}=\frac{a-1}{2a}$$
and
$$\delta^\w_{P_1}=\frac{1}{2} \frac{(1-1-1+b)}{b}=\frac{b-1}{2b}.$$
Therefore, according to \autoref{thm-main}
$$
g(\cC)=g_{d,\w}-\delta^\w_{P_0}-\delta^\w_{P_1}=
\frac{2ab-a-b}{2ab} - \frac{a-1}{2a} - \frac{b-1}{2b} = 0.
$$
\end{exam}

\begin{exam}
Let us consider now the curve $\cC=\{X_0X_1-X_2^2\}\subset \PP^2_{\w}$, with $\w=(2k-1,2k+1,2k)$. 
In order to use \autoref{thm-main} one needs to compute the virtual genus of $\cC$
$$
g_{4k,\w}=\frac{4k(4k-6k)}{2(4k^2-1)2k}+1=1-\frac{2k}{(4k^2-1)}.
$$
On the other hand $\Sing(\cC)=\{P_0,P_1\}$, $P_0\in X(2k-1;2k+1,2k)$ and $P_1\in X(2k+1;2k-1,2k)$.

Using \autoref{deltapqab} one has, 
$$\delta_{P_0}^\w=\frac{2-1-2+(2k-1)}{2(2k-1)}=\frac{k-1}{2k-1}$$
and
$$\delta_{P_1}^\w=\frac{2-1-2+(2k+1)}{2(2k+1)}=\frac{k}{2k+1}.$$

Therefore, according to \autoref{thm-main}
$$
g(\cC)=1-\frac{2k}{(4k^2-1)}-\frac{k-1}{2k-1}-\frac{k}{2k+1}=0.
$$

\end{exam}

\begin{exam}

Let us consider the curve $\cC=\{X_0X_1X_2+(X_0^3-X_1^2)^2\}\subset \PP^2_{\w}$ of quasi-homogeneous degree $d=12$, 
with $\w=(2,3,7)$. 
Note that 
$$
g_{12,\w}=\frac{12(12-12)}{2\bar{\w}}+1=1.
$$
On the other hand, $\Sing(\cC)=\{P_2\}$, which is a quotient singularity of local type $xy+(x^2-y^3)^2$ in $X(7;2,3)$. 
In order to obtain $\delta_{P_2}^\w$ one can perform, for instance, a blow-up of type $(1,5)$. The multiplicity of the 
exceptional divisor is $6$ and hence
\begin{equation}
\label{eq-exdelta1}
\frac{\nu(\nu-p-q+e)}{2dpq}=\frac{6(6-1-5+7)}{2\cdot 7\cdot 1\cdot 5}=\frac{3}{5}.
\end{equation}
After this first blow-up, the two branches separate and the strict preimage becomes a smooth 
branch (at a smooth point of the surface) and a singularity of type $x^p-y^q$, where $(p,q)=(1,14)$, in 
$X(5;2,1)$. Using formula~\eqref{eq-pq} one obtains:
\begin{equation}
\label{eq-exdelta2}
\frac{\nu(\nu-p-q+e)}{2dpq}=\frac{pq-p-q+d}{2d}=\frac{14-1-14+5}{2\cdot 5}=\frac{2}{5}.
\end{equation}
Combining~\eqref{eq-exdelta1} and \eqref{eq-exdelta2} one obtains
$$\delta_{P_2}^\w=\frac{3}{5}+\frac{2}{5}=1.$$
Therefore $g(\cC)=1-1=0$ according to \autoref{thm-main}.
\end{exam}

\end{document}